\theoremstyle{definition}
\newtheorem{theorem}{Theorem}[section]
\newtheorem{lemma}[theorem]{Lemma}
\newtheorem{proposition}[theorem]{Proposition}
\newcommand{\T}{\mathsf{T}} 
\newcommand{\N}{\mathbb{N}} 
\newcommand{\E}{\mathbb{E}}
\newcommand{\bigO}[1]{\mathcal{O}\left(#1\right)} 
\newcommand{\R}{\mathbb{R}}
\newcommand{\Z}{\mathbb{Z}}
\newcommand{\x}{\mathbf{x}}
\newcommand{\y}{\mathbf{y}}
\newcommand{\w}{\mathbf{w}}
\newcommand{\z}{\mathbf{z}}
\newcommand{\p}{\mathbf{p}}
\newcommand{\e}{\mathbf{e}}
\newcommand{\s}{\mathbf{s}}
\newcommand{\g}{\mathbf{g}}
\newcommand{\f}{\mathbf{f}}
\renewcommand{\a}{\mathbf{a}}
\renewcommand{\b}{\mathbf{b}}
\renewcommand{\c}{\mathbf{c}}
\newcommand{\D}{\mathcal{D}}
\renewcommand{\H}{\mathcal{H}}
\renewcommand{\O}{\mathcal{O}}
\newcommand{\I}{\mathcal{I}}
\renewcommand{\P}{\mathcal{P}}
\renewcommand{\r}{\mathbf{r}}
\newcommand{\0}{\mathbf{0}}
\DeclareMathOperator{\sgn}{sgn} % sign function
\DeclareMathOperator{\argmax}{arg\,max} % argmax
\DeclareMathOperator{\Pdim}{Pdim} % Pseudo dimension
\DeclareMathOperator{\Enc}{Enc} % Encoder function
\DeclareMathOperator{\CG}{\mathsf{CG}} % Chvatal-Gomory cuts
\DeclareMathOperator{\GMI}{\mathsf{GMI}} % GMI cuts
\title{Learning Cut Generating Functions for Integer Programming}
\author{
  Hongyu Cheng\\
  AMS\\
  Johns Hopkins University\\
  Baltimore, MD 21218 \\
  \texttt{hongyucheng@jhu.edu} \\
  \And
  Amitabh Basu \\
  AMS\\
  Johns Hopkins University\\
  Baltimore, MD 21218 \\
  \texttt{basu.amitabh@jhu.edu} \\
}
\begin{document}

\maketitle

\begin{abstract}
    The branch-and-cut algorithm is the method of choice to solve large scale integer programming problems in practice. A key ingredient of branch-and-cut is the use of {\em cutting planes} which are derived constraints that reduce the search space for an optimal solution. Selecting effective cutting planes to produce small branch-and-cut trees is a critical challenge in the branch-and-cut algorithm. Recent advances have employed a data-driven approach to select optimal cutting planes from a parameterized family, aimed at reducing the branch-and-bound tree size (in expectation) for a given distribution of integer programming instances. We extend this idea to the selection of the best cut generating function (CGF), which is a tool in the integer programming literature for generating a wide variety of cutting planes that generalize the well-known Gomory Mixed-Integer (GMI) cutting planes. We provide rigorous sample complexity bounds for the selection of an effective CGF from certain parameterized families that provably performs well for any specified distribution on the problem instances. Our empirical results show that the selected CGF can outperform the GMI cuts for certain distributions. Additionally, we explore the sample complexity of using neural networks for instance-dependent CGF selection.
\end{abstract}

\section{Introduction}

Integer linear programming is an optimization framework that has diverse applications in logistics~\cite{arntzen:br:ha:ta:1995,sinha:ch:mi:du:si:1995,hane:ba:jo:ma:si:1995}, finance~\cite{bertsimas:1999}, engineering~\cite{grossmann1995mixed}, national defense~\cite{gryffenberg:la:sm:uy:bo:ho:ni:va:1997,jiang2014computational}, healthcare~\cite{ajayi2024combination,valeva2023acuity}, statistics~\cite{bertsimas2016best,dash2018boolean,wei2019generalized} and machine learning~\cite{bertsimas2017optimal,chen2021integer,malioutov2023heavy}, to give a small representative sample of applications and related literature. The method of choice for solving integer programming problems is the well-known {\em branch-and-cut} paradigm~\cite{sch,nemhauser1988integer,conforti2014integer}. This procedure has two critical ingredients: {\em branching}, which is a way to subdivide the problem into smaller subproblems, and the use of {\em cutting planes} which is a way to reduce the feasible region of a (sub)problem by deriving additional linear constraints that are implicitly implied by the integrality of the decision variables. To get to a concrete algorithm from this high level framework, one needs to make certain choices (such as how to branch, which cutting plane to use etc.) Thus, at a high level, the branch-and-cut method is really a collection of algorithms equipped with a common set of ``tunable parameters". Upon a specific choice of values for these parameters, one actually obtains a well-defined algorithm that one can deploy on instances of the problem.

There is a substantial amount of literature on the mathematical foundations of cutting planes and branching and we understand many of their theoretical properties. Nevertheless, current theory does not give very precise recipes for making these parameter choices in branch-and-cut for getting the best performance on particular instances. There are some general insights available from theory, but a large part of the actual deployment of these algorithms in practice is heuristic in nature. This state of affairs has prompted several groups of researchers to explore the possibility of using machine learning tools to make these parameter choices in branch-and-cut; see~\cite{scavuzzo2024machine} for a survey and the references therein.

In this paper we focus on some foundational aspects of the use of machine learning to improve algorithmic performance of branch-and-cut. One can formalize the problem as follows. Given a family of instances of integer programming problems, one wishes to select the parameters of branch-and-cut that will perform well on average, and one wishes to do this in a data-driven manner. More formally, one assumes there is a probability distribution over the family of instances and the goal is to find the choice of parameters that gives the best performance in expectation with respect to this distribution. The probability distribution is not explicitly known, but one can sample instances from the distribution and use these samples to guide the choice of parameters. This puts the problem in a classical learning theory framework and one can ask questions like the sample size required to guarantee success with high accuracy and high probability (over the samples). This perspective was pioneered in a recent series of papers~\cite{balcan2021much,balcan2021sample,balcan2021improved,balcan2022structural,balcan2018learning} with several excellent insights and technical contributions. The broader question of selecting a good algorithm from a suite of algorithms for a computational problem, given access to samples from a distribution over the instances of the problem, is generally called {\em data-driven algorithm design} and has received attention recently;  see~\cite{balcan2020data,balcan2021much} and the references therein. There is also a lot of recent activity in the related aspect of {\em algorithm design with predictions}; see, e.g.,~\cite{mitzenmacher2022algorithms}.

This paper contributes to this line of research that analyzes the sample complexity of using machine learning tools for parameter selection in branch-and-cut. In particular, we focus on the {\em cut selection problem}, which is also a central theme in the papers~\cite{balcan2021sample,balcan2021improved,balcan2022structural}. Thus, we wish to learn what choice of cutting planes within the branch-and-cut procedure gives the best performance. Our results differ from this previous work in two main ways.

\begin{enumerate}
    \item In~\cite{balcan2021sample,balcan2021improved,balcan2022structural}, the authors focus on the classical cutting plane families of Chv\'atal-Gomory (CG) and Gomory Mixed-Integer (GMI) cutting planes. These are actually two very special cases of a much more general framework for deriving cutting planes for integer programming problems, which is called {\em cut generating function theory}. The idea behind cut generating functions goes back to seminal work by~\cite{bal} and~\cite{infinite,infinite2} from the 1970s, and there has been a tremendous amount of progress in our understanding of this theory in the past 15 years. The main point is that cut generating functions give a parameterized family of cutting planes to choose from, which vastly generalizes the CG and GMI cuts. We provide sample complexity results for cut selection from this larger family. This requires new understanding of the structure of these cutting planes and their interplay with learning theory tools. While we do rely on some of the ideas from~\cite{balcan2021sample,balcan2021improved,balcan2022structural}, several new technical and algorithmic challenges need to be overcome.

    The full potential of cut generating functions has not been realized despite sustained efforts in the past decade; see~\cite[Chapter 6]{poirrier2014multi} for a nuanced discussion. We believe an important factor behind this is that the cut selection problem for this much larger class is very hard. Thus, while they promise significant gains over traditional ideas like GMI cuts, it has been difficult to utilize them in practice. Using modern machine learning tools to help with the cut selection could be the key to deploying these powerful tools in practice. Thus, we believe our results to be significant in that they are the first of their kind in terms of establishing a rigorous foundation for using machine learning to solve the cut selection problem for cut generating functions.
    \item All of the work in~\cite{balcan2021much,balcan2021sample,balcan2021improved,balcan2022structural,balcan2018learning} focuses on making a {\em single} choice of parameters that does well in expectation. However, it can be significantly more beneficial to allow the choice of parameters to {\em depend on the instance} \cite{rice1976algorithm,gupta2016pac}. In recent work, the use of neural networks was suggested as a way to map instances to the choice of parameters in data-driven algorithm design~\cite{cheng2024data}, and sample complexity bounds were derived for learning such a neural network. The authors in~\cite{cheng2024data} worked with CG and GMI cuts. Here we extend that analysis to learn neural mappings to more general cut generating functions.
\end{enumerate}

 Cut generating functions derive cutting planes by using the data from an optimal simplex tableaux for the linear programming relaxation of the integer programming problem. CG and GMI cutting planes are a special case of such cutting planes where data from a {\em single} tableaux row is used. Our first result (Theorem~\ref{thm:LearnabilityOfOptimalMu1Mu2}) establishes sample complexity bounds for a parameterized family of cut generating functions that uses information from single rows of the simplex tableaux, but goes beyond CG and GMI cuts. Cut generating function theory also allows the use of information from {\em multiple rows} of the tableaux, which will naturally result in stronger cutting planes because more information from the problem is used to derive these cuts. Our second result (Theorem~\ref{thm:LearnabilityOfKDimCGF}) gives sample complexity results for a parameterized family of cut generating functions that uses information from $k$ rows, for any fixed natural number $k \geq 2$. In Section~\ref{sec:LearnabilityOfInsDepCGF}, these sample complexity results are extended to learning neural networks that map instances to cut generating functions, achieving an instance dependent performance.
 
 The choice of the cut generating function families that we study in this paper were dictated by three things: 1) we should be able to derive cutting planes from them in a computationally efficient way, 2) we should be able to derive concrete sample complexity bounds, and 3) we should be able to demonstrate that these new cutting planes are significantly better in practice than classical cuts. Sections~\ref{sec:LearnabilityOf1DimCGF},~\ref{sec:LearnabilityOfKDimCGF} and~\ref{sec:LearnabilityOfInsDepCGF} achieve aims 1) and 2). Section~\ref{sec:computational} gives evidence for 3) by showing that our choice of cut generating functions can indeed improve performance of branch-and-cut significantly, especially with the use of instance dependent cutting planes. Our performance criteria is the overall tree size (number of nodes explored by the solver) for solving the problem. For those familiar with cut generating function theory, we mention that our cut generating functions are all {\em extreme functions}. This provides some additional theoretical underpinning to the claim that these cutting planes are of good quality.

  We begin our formal presentation in Section~\ref{sec:notation} where we introduce the required concepts, terminology and notation from integer programming and learning theory that are needed to state and prove our results formally, which is done in Sections~\ref{sec:LearnabilityOf1DimCGF},~\ref{sec:LearnabilityOfKDimCGF} and~\ref{sec:LearnabilityOfInsDepCGF}.
 
\section{Formal setup of the problem}\label{sec:notation}

Throughout the paper, we will use the following standard notations. The sign function $\sgn:\R \rightarrow \{0,1\}$ is defined by $\sgn(x) = 1$ if $x \geq 0$ and $\sgn(x) = 0$ if $x < 0$. The set $\Delta_d^\tau = \left\{\x \in \R^d: \x_1,\dots,\x_d \geq \frac{1}{\tau}, \sum_{i=1}^{d}\x_i=1\right\}$ represents a $d$-dimensional simplex, where $\tau \geq {2d}$ is some predefined number. For a set of vectors $\{\x^1, \ldots, \x^t\} \subseteq \R^d$, we use superscripts to denote vector indices and subscripts to specify the coordinates in a vector, such that $\x^i_j$ refers to the $j$-th coordinate of vector $\x^i$. The floor and ceiling functions, denoted by $\lfloor \cdot \rfloor$ and $\lceil \cdot \rceil$ respectively, round each component of a vector down or up to the nearest integer. We denote $[\x] = \x - \lfloor \x \rfloor$ for any $\x \in \R^d$, representing the fractional part of each component of the vector.

\subsection{Integer linear programming background}\label{sec:IP-background}

Given positive integers $m,n$, a pure integer linear programming (ILP) problem in canonical form can be described as:  
\begin{equation}\label{eq:IP_canonical_form}
    \begin{aligned}
        \max \quad & \c^\T \x \\
        \text{s.t.} \quad & A \x \leq \b, \x \geq \0, \x \in \Z^n,
    \end{aligned}
\end{equation}
for some $A\in\Z^{m \times n}, \b \in \Z^m$, and $\c \in \R^n$. This problem is represented by the tuple $(A, \b, \c)$. In this paper, we consider the set of ILP instances $\I$ such that for any $I=(A,\b,\c) \in \I$, there exists a universal constant $\varrho$ such that $\sup \left\{ \lceil \| \x \|_\infty \rceil: A \x \leq \b, \x \geq \0 \right\} \leq \varrho$. 

A {\em cutting plane} for~\eqref{eq:IP_canonical_form} is given by $(\bm{\alpha}, \beta) \in \R^n \times \R$ such that the inequality $\bm{\alpha}^\T\x \leq \beta$ is satisfied by all points in the feasible region $\{\x \in \Z^n: A\x \leq \b, \x \geq \0\}$ of~\eqref{eq:IP_canonical_form}.

\paragraph{Cut generation functions.} We now present the technique of cut generating functions to derive cutting planes for~\eqref{eq:IP_canonical_form}. Consider the equivalent standard form of \eqref{eq:IP_canonical_form} after introducing {\em slack variables}: 
\begin{equation}\label{eq:IP_standard_form}
    \widetilde{A} \y = \b,\ \y \in \Z_+^{m+n},
\end{equation}
where $\widetilde{A} = [A, I] \in \Z^{m \times (n+m)}$. The optimal simplex tableaux of the above problem with respect to a basis $B \subseteq \{1,\dots,m+n\}$ with $|B| = m$ is given by
\begin{equation}\label{eq:simplex_tableau}
    \y^B + \widetilde{A}_B^{-1} \widetilde{A}_N \y^{N} = \widetilde{A}_B^{-1} \b,\ \y \in \Z_+^{m+n},
\end{equation}
where $\widetilde{A}_B$ and $\widetilde{A}_N$ represent the submatrices of $\widetilde{A}$ corresponding to the columns indexed by $B$ and $N = \{1, \dots, m+n\} \setminus B$, respectively. 
%\cite{gomory1969some} introduced a relaxation where $\y_B$ is allowed to take negative values, leading to the following so-called corner polyhedron:
We select any $k$ rows, where $k \in \{1,...,m\}$, from \eqref{eq:simplex_tableau} such that the right-hand side vector is not integral, 
and write the resulting system as:
\begin{equation}\label{eq:equivalent_simplex_tableau}
    \z + \sum_{i=1}^{n} \r^i \y^{N}_i = \f,\ \y^N \in \Z_+^n,\z \in \Z_+^k, 
\end{equation}
where $\r^1, \dots, \r^n \in \R^{k}$ and $\f \in \R^k \backslash \Z^k$ are subvectors of $\widetilde{A}_B^{-1} \widetilde{A}_N$ and $\widetilde{A}_B^{-1} \b$, respectively, corresponding to the selected rows. 
Suppose we have a function $\pi : \R^k \rightarrow \R_+$ satisfying the following conditions:
\begin{enumerate}
    \item $\pi(\0) = 0$, $\pi(\f) = 1$, 
    \item subadditivity: $\pi(\r+\r') \leq \pi(\r)+\pi(\r'),\forall \r,\r' \in \R^k$, 
    \item periodicity: $\pi(\r+\w) = \pi(\r),\forall \r \in \R^k, \w \in \Z^k$,
\end{enumerate}
then for any feasible $\y^N$ and $\z$ we have the following inequality: 
\begin{equation}\label{eq:valid_cut}
    1=\pi(\f) = \pi\left( \z + \sum_{i=1}^{n}\r^i \y_i^N \right) = \pi \left( \sum_{i=1}^{n} \r^i \y_i^N \right) \leq \sum_{i=1}^{n}\pi\left( \r^i \y_i^N \right) \leq \sum_{i=1}^{n}\pi\left(\r^i\right) \y_i^N.
\end{equation}
It's noteworthy that the optimal solution to the relaxed linear programming problem, $\left[ \y^B,\y^N \right] = \left[ \widetilde{A}_B^{-1} \b, \0 \right]$, will always violate this inequality. By substituting out the slack variables using~\eqref{eq:IP_standard_form}, the inequality $\sum_{i=1}^{n}\pi\left(\r^i\right) \y_i^N \geq 1$ becomes a cutting plane $\bm{\alpha}^\T\x \leq \beta$ for~\eqref{alg:TrivialLifting}; see Lemma~\ref{lem:CutIsLinear}.

We present two classical examples of one-dimensional (i.e., $k=1$) cut generating functions here (as well as their plots in \cref{fig:cg} and \cref{fig:gmi}). A family of one-dimensional cut generating functions we will use in this paper will be presented in Section~\ref{sec:LearnabilityOf1DimCGF}, and a family of $k$-dimensional cut generating functions, for arbitrary $k \geq 1$, will be presented in Section~\ref{sec:LearnabilityOfKDimCGF}. 

\paragraph{Gomory fractional cut~\cite{gomory1958outline}:}
Define $\CG_f(r) = \frac{[r]}{[f]}$. Applying this function to the $j$-th row of the simplex tableau $\eqref{eq:equivalent_simplex_tableau}$ with $\f_j \notin \Z$, the valid cut $\eqref{eq:valid_cut}$ translates to the Gomory fractional cut:
\begin{equation*}
    \sum_{i=1}^{n} \frac{[\r^i_j]}{[\f_j]} \y_i^N \geq 1 
    \iff \sum_{i=1}^{n} \frac{\r^i_j - \lfloor \r^i_j \rfloor}{\f_j - \lfloor \f_j \rfloor} \y_i^N \geq 1 
    \iff \sum_{i=1}^{n} \left(\r^i_j - \lfloor \r^i_j \rfloor\right) \y_i^N \geq \f_j - \lfloor \f_j \rfloor.
\end{equation*}
This cut generating function gives cutting planes that are equivalent to the well-known Chv\'atal-Gomory (CG) cuts; see~\cite[Section 5.2.4]{conforti2014integer} for a discussion.

\paragraph{Gomory's mixed-integer cut~\cite{gomory1960algorithm}:}
The GMI cut function $\GMI_f(r)$ is defined as $\frac{[r]}{[f]}$ when $[r] \leq [f]$, and $\frac{1-[r]}{1-[f]}$ when $[r] > [f]$. Applying to the $j$-th row with $\f_j \notin \Z$, the valid cut $\eqref{eq:valid_cut}$ translates to the GMI cut:
\begin{equation*}
    \sum_{i: [\r^i_j] \leq [\f_j]} \frac{[\r^i_j]}{[\f_j]} \y_i^N 
    + \sum_{i: [\r^i_j] > [\f_j]} \frac{1-[\r^i_j]}{1-[\f_j]} \y_i^N \geq 1 \iff \sum_{i: [\r^i_j] \leq [\f_j]} [\r^i_j] \y_i^N 
    + \frac{[\f_j]}{1-[\f_j]} \sum_{i: [\r^i_j] > [\f_j]} \left(1-[\r^i_j]\right) \y_i^N \geq [\f_j]. 
\end{equation*}

\begin{figure}[htbp]
    \centering
    \begin{subfigure}[b]{0.32\textwidth}
        \includegraphics[width=\textwidth]{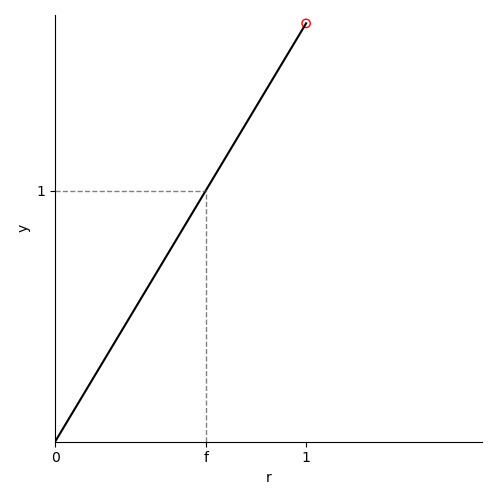}
        \caption{$\CG_f$}
        \label{fig:cg}
    \end{subfigure}
    \hfill
    \begin{subfigure}[b]{0.32\textwidth}
        \includegraphics[width=\textwidth]{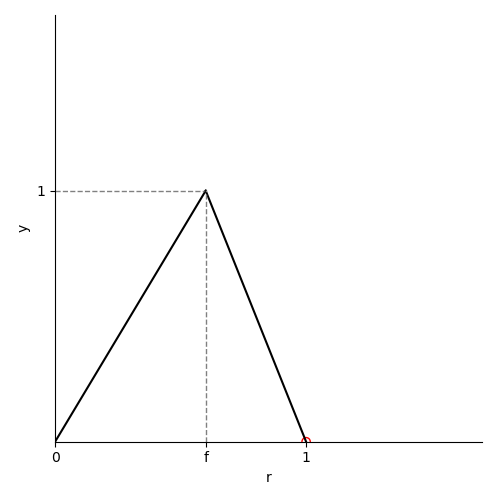}
        \caption{$\GMI_f$}
        \label{fig:gmi}
    \end{subfigure}
    \hfill
    \begin{subfigure}[b]{0.32\textwidth}
        \includegraphics[width=\textwidth]{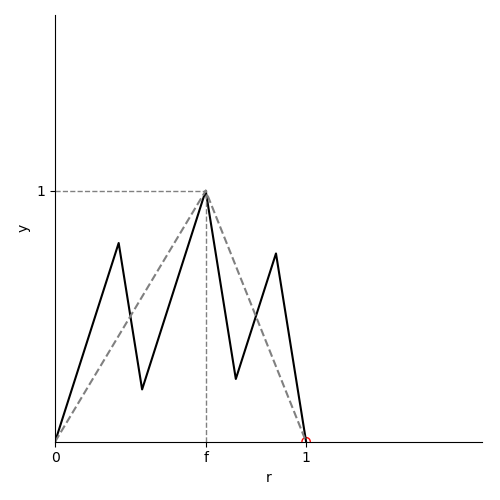}
        \caption{$\pi_{f, s_1, s_2}^{p,q}$}
        \label{fig:cgf}
    \end{subfigure}
    \caption{Three cut generating functions on $[0,1)$, where $\pi_{f, s_1, s_2}^{p,q}$ is defined in \cref{sec:LearnabilityOf1DimCGF}.}
    \label{fig:three_CGFs}
\end{figure}

\subsection{Sample complexity of selecting cut generating functions}

We will consider parameterized families of cut generating functions and the sample complexity of learning which cut generating functions work well. More precisely, we will track how well branch-and-cut performs when the cutting plane corresponding to a specific cut generating function is added to the initial linear programming relaxation of the problem, a.k.a the {\em root node of the branch-and-cut tree}. We will use the overall tree size needed to solve the problem as the quantitative measure of performance, which is strongly correlated with the overall solve time.

    Consider an unknown probability distribution $\D$ over the instance space $\I$. We are presented with problems drawn independently and identically distributed (i.i.d.) from this distribution. We also have a family of cut generating functions parameterized by $\bm{\mu} \in \P$. Let $c(I,\bm{\mu})$ denote the cutting plane obtained by applying the cut generating function corresponding to $\bm{\mu}$ to the instance $I$ as explained in the previous section. $h(I,\bm{\mu}) \in [0,B]\cap\Z$ will denote the truncated branch-and-cut tree size for some $B>0$, when the cutting plane $c(I,\bm{\mu})$ is used at the root node for processing $I$.
    
    The objective is to find the cut generating function that minimizes the expected tree size over the distribution $\D$, i.e., we want to solve the stochastic optimization problem $\min_{\bm{\mu} \in \P} \E_{I \sim \D} h(I, \bm{\mu})$. For any $\varepsilon > 0$ and $\delta \in (0,1)$, the {\em sample complexity} of the problem is a natural number $N = N(\varepsilon, \delta)$ such that if the number of sampled instances exceeds $N$, the expected tree size for the distribution $\D$ and the average tree size for the sampled instances differ by less than $\varepsilon$ for every $\bm{\mu} \in \P$, with probability (over the samples) at least $1 - \delta$. Thus, if we have that many samples, we can use the cut generating function that minimizes the average tree size on our instances (this is a deterministic optimization problem since the sample is at hand, a.k.a the {\em empirical risk minimization (ERM)} or {\em sample average approximation (SAA)} problem) and we will do well in expectation, i.e., generalize to unseen instances, with high probability.
    
    The pseudo-dimension $\Pdim(\H)$ is a measure of the `complexity' of the associated function class $\H := \{h(\cdot,\bm{\mu}) : \bm{\mu} \in \P\}$, and is a key concept closely related to sample complexity. It is defined as the largest integer $t$ for which there exists a set of instances and real values $(I_1, s_1), \dots, (I_t, s_t) \in \I \times \R$ such that
    \begin{equation*}
        2^t = \left| \left\{ \left( \sgn(h(I_1, \bm{\mu}) - s_1), \dots, \sgn(h(I_t, \bm{\mu}) - s_t) \right) : \bm{\mu} \in \P \right\} \right|.
    \end{equation*}
    A classical result in statistical learning theory (e.g., Theorem 19.2 in \cite{anthony1999neural}) implies the sample complexity bound
    \begin{equation*}
        N(\varepsilon, \delta) = \bigO{\frac{B^2}{\varepsilon^2} \left(\Pdim(\H) \log\left(\frac{B}{\varepsilon}\right) + \ln\left(\frac{1}{\delta}\right)\right)}.
    \end{equation*} 
    Thus, our task reduces to finding an upper bound for $\Pdim(\H)$. 
    
    In learning theory, identifying the piecewise structure of the function class $\H=\{h(\cdot,\bm{\mu}):\bm{\mu} \in \P\}$ is a standard technique for bounding its pseudo-dimension (see \cite{anthony1999neural,bartlett1998almost,bartlett2019nearly,sontag1998vc,balcan2021much}). For a fixed instance $I\in \I$, one shows that the parameter space $\P$ can be partitioned into regions such that the function $h(I, \bm{\mu})$ behaves as a fixed `simple function' within each region. The partition is defined by a set of functions on $\P$ and the regions of the partition correspond to parameter values in $\bm{\mu}\in\P$ where these functions have invariant signs.

    Given that our tree size function is an integer-valued function, we present a particular such result below in \cref{lem:pseudo-dimension}, which motivates the piecewise structure results presented later in \cref{prop:LinearDecompositionsOfPiWithRespectToMu1Mu2} and \cref{prop:PiecewiseStructureOfGamma}. These results will be used in the proofs of \cref{thm:LearnabilityOfOptimalMu1Mu2} and \cref{thm:LearnabilityOfKDimCGF} to bound the pseudo-dimension. The proof of \cref{lem:pseudo-dimension} is provided in \cref{section:auxiliary-lemmas}. Note that a more general version of this result is given in \cite{balcan2021much}, but our specific version is asymptotically better than a direct application of the general result, since Sauer's lemma (\cite{sauer1972density,shelah1972combinatorial}) is not involved in the proof. 
    
\begin{lemma}\label{lem:pseudo-dimension}
    Let $h:\I \times \P \rightarrow \Z$, where $\P \subseteq \R^d$ for some natural number $d$. Suppose that for any fixed $I \in \I$, there exist at most $\Gamma$ rational functions, each given by the quotient of two polynomials of degree at most $a$ and $b$ with the polynomial in the denominator always strictly positive, that decompose $\P$ such that within each of these regions, $h(I,\bm{\mu})$ is constant. Then, the pseudo-dimension is given by
    \begin{equation*}
        \Pdim\left( \left\{ h(\cdot,\bm{\mu}) : \bm{\mu} \in \P \right\} \right) = \bigO{d \log(\Gamma a)}.
    \end{equation*}
\end{lemma}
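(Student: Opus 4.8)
The plan is to bound $\Pdim(\H)$ directly from its definition via a count of sign patterns of real polynomials. Suppose $(I_1,s_1),\dots,(I_t,s_t) \in \I \times \R$ is a set shattered by $\H = \{h(\cdot,\bm{\mu}):\bm{\mu}\in\P\}$, so that the map $\bm{\mu} \mapsto \bigl(\sgn(h(I_1,\bm{\mu})-s_1),\dots,\sgn(h(I_t,\bm{\mu})-s_t)\bigr)$ realizes all $2^t$ possible vectors as $\bm{\mu}$ ranges over $\P$. Since $\Pdim(\H)$ is the largest such $t$, it suffices to prove $t = \bigO{d\log(\Gamma a)}$.

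For each fixed $j$, apply the hypothesis to $I_j$: there are rational functions $g_{j,1},\dots,g_{j,\Gamma}$ on $\P$, with $g_{j,\ell} = p_{j,\ell}/q_{j,\ell}$, $\deg p_{j,\ell} \le a$, and $q_{j,\ell}>0$ on $\P$, whose sign conditions partition $\P$ into regions on each of which $h(I_j,\cdot)$ is constant (pad with constant functions if fewer than $\Gamma$ are supplied). Two observations drive the proof. First, strict positivity of $q_{j,\ell}$ gives $\sgn(g_{j,\ell}(\bm{\mu})) = \sgn(p_{j,\ell}(\bm{\mu}))$, so the denominator degree $b$ never enters and we may work with the numerators $p_{j,\ell}$, which have degree at most $a$. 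Second, the joint sign vector $\bigl(\sgn(p_{j,\ell}(\bm{\mu}))\bigr)_{1\le j\le t,\,1\le\ell\le\Gamma}$ determines, for each $j$, the region of the $j$-th partition containing $\bm{\mu}$, hence the value $h(I_j,\bm{\mu})$, hence $\sgn(h(I_j,\bm{\mu})-s_j)$; so it determines the entire label vector. Consequently
\[
2^t \;\le\; \left|\bigl\{\bigl(\sgn(p_{j,\ell}(\bm{\mu}))\bigr)_{j,\ell} : \bm{\mu} \in \R^d\bigr\}\right|,
\]
the number of sign patterns of the $t\Gamma$ polynomials $p_{j,\ell}$, each of degree at most $a$, in $d$ variables.

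I would then invoke the classical polynomial sign-pattern bound (Warren's theorem; see, e.g., \cite{anthony1999neural}): $M$ real polynomials of degree at most $a$ in $d$ variables realize at most $\bigl(\tfrac{C a M}{d}\bigr)^d$ sign patterns for an absolute constant $C$, provided $M \ge d$; if $t\Gamma < d$ then $t < d = \bigO{d\log(\Gamma a)}$ already. Taking $M = t\Gamma$ gives $2^t \le \bigl(\tfrac{C a t\Gamma}{d}\bigr)^d$, i.e.\ $t \le d\log_2\!\bigl(\tfrac{C a t\Gamma}{d}\bigr)$. Writing $u := t/d$ and using $\tfrac{C a t\Gamma}{d} = C a\Gamma u$, this reads $u \le \log_2(Ca\Gamma) + \log_2 u$; since $\log_2 u \le u/2$ for $u \ge 4$, we get $u \le \max\{4,\,2\log_2(Ca\Gamma)\}$, whence $t = du = \bigO{d\log(\Gamma a)}$.

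I do not expect a serious obstacle: the whole argument is a standard application of the Warren/Milnor--Thom sign-counting machinery combined with an elementary logarithmic inequality, and—importantly—it uses neither Sauer's lemma nor a composition/VC argument, which is exactly why the resulting bound is sharper than a black-box appeal to the general framework of \cite{balcan2021much}. The two spots that need care are (i) checking that the joint sign vector of the $t\Gamma$ numerator polynomials genuinely pins down the full label vector — this is where strict positivity of the denominators is exploited to discard $b$, and where one must be slightly careful about the precise meaning of ``region'' in the hypothesis (sign-condition cells), though any reasonable reading leaves the count unchanged up to the Warren bound — and (ii) the final estimate, where pulling the factor $1/d$ inside the logarithm is what cancels a would-be $d\log d$ term and yields the clean $\bigO{d\log(\Gamma a)}$ rather than $\bigO{d\log(\Gamma a d)}$; to keep this rigorous I would isolate the elementary fact that $u \le K + \log_2 u$ with $K \ge 1$ forces $u \le 2K+2$ and apply it with $K = \log_2(Ca\Gamma)$.
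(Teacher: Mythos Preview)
Your proposal is correct and follows essentially the same approach as the paper's proof: collect the $\Gamma t$ numerator polynomials across the $t$ instances, observe that positivity of the denominators makes the sign of each rational function equal to the sign of its numerator (so $b$ drops out), bound the number of sign patterns of these $\Gamma t$ degree-$a$ polynomials in $d$ variables via the Warren/Milnor--Thom estimate (the paper's \cref{lem:poly-decomp}), and then solve $2^t \le (C a \Gamma t / d)^d$ by the standard logarithmic inequality. The only cosmetic difference is that you spell out the final step $u \le K + \log_2 u \Rightarrow u = O(K)$ explicitly, whereas the paper defers it to ``standard arguments from learning theory''.
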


\section{One-dimensional cut generation functions} \label{sec:LearnabilityOf1DimCGF}

In this section, we present a family of one-dimensional cut generating functions (originally proposed in~\cite{gomory2003t}) that we believe satisfy the three criteria laid out in the Introduction, i.e., cutting planes can be obtained efficiently from them (we present closed form formulas below), sample complexity (pseudo-dimension) bounds can be established rigorously (Theorem~\ref{thm:LearnabilityOfOptimalMu1Mu2} below), and they result in significantly smaller tree sizes compared to traditional cutting planes (Section~\ref{sec:computational}).

\subsection{The construction}\label{sec:GomoryJohnsonConstruction}

    For any $f\in(0,1),p,q \in[2,+\infty] \cap \Z$, $s_1,s_2\in\R$, let
    \begin{equation*}
        \pi_{f,s_1,s_2}^{p,q}(r) = \max \left\{ \left\{\min\left\{\phi_i^1(r),\phi_i^2(r)\right\}:i=1,\dots,p\right\}\cup\left\{ \min\left\{ \psi_j^1(r),\psi_j^2(r) \right\}: j=1,\dots,q-1 \right\} \right\}, 
    \end{equation*}
    where 
    \begin{align*}
        \phi_i^1(r) &= s_1 r + i \frac{1-fs_1}{p}, \quad \phi_i^2(r) = s_2 r + (i-1)\frac{1-fs_2}{p-1},i=1,\dots,p,\\
        \psi_j^1(r) &= s_1(r-1) + (j-1) \frac{1+(1-f)s_1}{q-1}, \quad \psi_j^2(r) = s_2(r-1) + j \frac{1+(1-f)s_2}{q},j=1,\dots,q-1.
    \end{align*}

See~\cref{fig:cgf,fig:GomoryJohnsonExtreme} for examples with different $f,p,q$. Let $\D_f^{p,q}$ denote the set of all $(s_1,s_2)$ such that $\pi_{f,s_1,s_2}^{p,q}(\cdot)$ is a valid cut generating function, i.e., it satisfies the three conditions outlined in \cref{sec:IP-background}. The closed form of this set is provided in \cref{lem:ValidMuRangeForK1K2}, which indicates that $\D_f^{p,q}$ is always a (possibly semi-infinite) rectangle.

\subsection{Pseudo-dimension bound}

We first show that the cutting plane coefficients for~\eqref{eq:IP_standard_form} derived from this family of cut generating functions has a piecewise affine linear structure. This is key to establishing the pseudo-dimension bounds in \cref{thm:LearnabilityOfOptimalMu1Mu2} using \cref{lem:pseudo-dimension}.

\begin{proposition}\label{prop:LinearDecompositionsOfPiWithRespectToMu1Mu2}
    For any fixed $f \in (0,1)$, $p, q \in \mathbb{N} \cap [2, +\infty]$, and $r_1,\dots,r_{n} \in [0,1)$, there exists a decomposition of the $(s_1,s_2)$ space $\D_f^{p,q}$ given by at at most $n$ hyperplanes such that, within each region, each coordinate of the cutting plane $\left[ \pi_{f,s_1,s_2}^{p,q}(r_1), \dots, \pi_{f,s_1,s_2}^{p,q}(r_{n}) \right]^{\mathsf{T}}$ is a fixed affine linear function of $(s_1, s_2)$.
\end{proposition}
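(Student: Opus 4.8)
The plan is to analyze, for a fixed instance data $r_1,\dots,r_n \in [0,1)$, how the value $\pi_{f,s_1,s_2}^{p,q}(r_\ell)$ depends on $(s_1,s_2)$ as $(s_1,s_2)$ ranges over the valid region $\D_f^{p,q}$. By construction, $\pi_{f,s_1,s_2}^{p,q}(r)$ is a maximum over a finite collection of functions, each of which is of the form $\min\{\phi_i^1(r),\phi_i^2(r)\}$ or $\min\{\psi_j^1(r),\psi_j^2(r)\}$, and each of the constituent functions $\phi_i^1,\phi_i^2,\psi_j^1,\psi_j^2$ is, for $r$ held fixed, an \emph{affine} function of $(s_1,s_2)$ (indeed each is a linear combination of $s_1$ and $s_2$ with coefficients depending only on $r$, $f$, $p$, $q$ — e.g. $\phi_i^1(r) = s_1 r + i\frac{1-fs_1}{p} = s_1(r - \tfrac{if}{p}) + \tfrac{i}{p}$). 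So the picture is: $\pi_{f,s_1,s_2}^{p,q}(r_\ell)$ is a max-min of affine functions of $(s_1,s_2)$, hence a piecewise affine function of $(s_1,s_2)$, and the only question is to control the number of pieces, i.e. to show a single common refinement across all $\ell = 1,\dots,n$ using at most $n$ hyperplanes.

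The key structural observation — and the step I expect to do the real work — is that for a \emph{fixed} $r$, the inner $\min$ over $\phi_i^1,\phi_i^2$ (resp. $\psi_j^1,\psi_j^2$) and the outer $\max$ over $i,j$ do \emph{not} each contribute their own hyperplane arrangement; rather, the entire selection of which index $(i,j)$ and which branch of the min is active is governed by essentially one threshold. This is where one must use the special algebraic form of the Gomory--Johnson construction (from \cite{gomory2003t}): the functions $\phi_i^1$ for $i=1,\dots,p$ are ``parallel translates'' sharing slope $s_1$ (in $r$), shifted by equal increments $\frac{1-fs_1}{p}$, and similarly $\phi_i^2$ share slope $s_2$; the graph of $\pi$ on $[0,f]$ is the ``sawtooth'' upper envelope, and for a fixed abscissa $r$ the active linear piece is determined by where $r$ falls relative to the breakpoints of this sawtooth, which are themselves affine in $(s_1,s_2)$. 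Concretely, I would show that for each fixed $r_\ell$ there is a single affine function $g_\ell(s_1,s_2)$ (the ``breakpoint location'' comparison, i.e. whether $r_\ell$ lies in a segment with active slope $s_1$ vs. active slope $s_2$, which for this construction reduces to comparing $r_\ell$ to the value $f$-fraction determined by $s_1,s_2$) whose sign determines which affine formula $\pi_{f,s_1,s_2}^{p,q}(r_\ell)$ follows; the index $i$ or $j$ and the min-branch are then pinned down deterministically by $r_\ell$ and $f,p,q$ alone, not by further inequalities in $(s_1,s_2)$. Taking the hyperplane $\{g_\ell = 0\}$ for each $\ell$ gives $n$ hyperplanes, and within any cell of the induced arrangement every coordinate $\pi_{f,s_1,s_2}^{p,q}(r_\ell)$ is a fixed affine function of $(s_1,s_2)$.

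To carry this out in order: (i) record that each $\phi_i^1,\phi_i^2,\psi_j^1,\psi_j^2$ is affine in $(s_1,s_2)$ for fixed $r$, with explicit coefficients; (ii) invoke the closed-form description of $\D_f^{p,q}$ from \cref{lem:ValidMuRangeForK1K2} so that we work only over valid $(s_1,s_2)$, where $\pi_{f,s_1,s_2}^{p,q}$ genuinely has the Gomory--Johnson sawtooth shape (this is important: validity is what forces the monotone/interleaving structure that keeps the number of active-piece transitions to one per $r_\ell$); (iii) for a fixed $r_\ell \in [0,1)$, split on whether $r_\ell \in [0,f)$ or $r_\ell \in [f,1)$ (a comparison independent of $(s_1,s_2)$), and in each case identify the active index and min-branch of $\pi_{f,s_1,s_2}^{p,q}(r_\ell)$, showing the only $(s_1,s_2)$-dependent comparison is a single affine inequality $g_\ell(s_1,s_2) \lessgtr 0$; (iv) collect the $n$ hyperplanes $\{g_\ell = 0\}_{\ell=1}^n$ and conclude that each cell of the arrangement is a region on which all $n$ coordinates are simultaneously fixed affine functions of $(s_1,s_2)$. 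The main obstacle is step (iii): one must be careful that the outer $\max$ and the inner $\min$ really do collapse to a single threshold rather than an arrangement of $O(p+q)$ hyperplanes per $r_\ell$; this requires exploiting that within a valid parameter range the family $\{\phi_i^1\}_i$ (and likewise $\{\phi_i^2\}_i$, $\{\psi_j^1\}_j$, $\{\psi_j^2\}_j$) is totally ordered by index at every fixed $r$, so that ``which $i$ wins the max'' is determined purely by the position of $r$ among fixed (in $(s_1,s_2)$) breakpoints, and the $\min$ between the superscript-$1$ and superscript-$2$ families contributes the lone genuine $(s_1,s_2)$-dependent comparison.
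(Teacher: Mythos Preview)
Your overall plan coincides with the paper's: locate, for each fixed $r_\ell$, a single affine inequality in $(s_1,s_2)$ that decides which affine piece $\pi_{f,s_1,s_2}^{p,q}(r_\ell)$ equals, then take the $n$ resulting hyperplanes. The paper carries this out exactly, so your strategy is right.

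The gap is in your proposed justification for step~(iii). Total ordering of each family $\{\phi_i^1\}_i$ and $\{\phi_i^2\}_i$ is true (in fact $\phi_i^1$ is increasing in $i$ while $\phi_i^2$ is decreasing in $i$, since $1-fs_1>0$ and $1-fs_2\le 0$ on $\D_f^{p,q}$), but this alone does \emph{not} show that ``which $i$ wins the max'' is governed by breakpoints in $r$ that are fixed in $(s_1,s_2)$. Indeed the crossover index $i^*$ where $\phi_{i^*}^1(r)\approx\phi_{i^*}^2(r)$ genuinely moves with $(s_1,s_2)$, and in roughly half of the relevant $r$-subintervals the two competing pieces carry \emph{different} indices (e.g.\ $\phi_i^1$ versus $\phi_{i+1}^2$), so your assertion that ``the index $i$ is pinned down deterministically by $r_\ell$ and $f,p,q$ alone'' is not correct as stated. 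What \emph{is} determined by $r_\ell,f,p,q$ alone is the unordered pair of candidate pieces, and the single comparison between them is then the one hyperplane.

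The paper supplies the missing mechanism: it proves (\cref{lem:IntersectionPointsOfPiAndGMI}) that the intersection points of $\pi_{f,s_1,s_2}^{p,q}$ with the $\GMI_f$ function occur at the fixed abscissae $if/p$, $if/(p-1)$ (and the analogous points on $[f,1]$), independent of $(s_1,s_2)$. This is because each $\phi_i^\alpha$ is, by design, the line of slope $s_\alpha$ through a fixed point on the $\GMI_f$ graph. These fixed points partition $[0,1)$ into $2(p+q-2)$ subintervals; in each subinterval's interior $\pi$ has exactly one breakpoint, located where two of the $\phi,\psi$ pieces meet, i.e.\ at $r=\frac{f_\ell(s_1,s_2)}{g_\ell(s_1,s_2)}$ with $f_\ell,g_\ell$ affine. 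For $r_\ell$ in a given subinterval the single hyperplane is then $\{f_\ell(s_1,s_2)=r_\ell\, g_\ell(s_1,s_2)\}$. Once you have this lemma, your steps (i), (ii), (iv) go through verbatim.
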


To make the parameter selection in more controlled manner, we introduce a large positive constant $M$ and adjust the bounds of $s_1$ and $s_2$ given in Lemma~\ref{lem:ValidMuRangeForK1K2} to finite ranges by replacing $-\infty$ and $+\infty$ with $\frac{1}{f-1} - M$ and $\frac{1}{f} + M$, respectively, in the corresponding cases. This restricts $(s_1,s_2)$ to the product of $2$ bounded intervals, denoted as $[l_1, u_1]\times[l_2,u_2]$, which is a bounded subset of $\D_f^{p,q}$. This allows us to use parameters $\bm{\mu}=(\bm{\mu}_1, \bm{\mu}_2) \in [0,1]^2$ to control $s_1$ and $s_2$ as follows: 
\begin{equation}\label{eq:MappingFromMu1Mu2ToS1S2}
s_1 = u_1-\bm{\mu}_1(u_1-l_1),\ s_2 = l_2 + \bm{\mu}_2(u_2 - l_2). 
\end{equation}
We remark that setting $\bm{\mu}_1=\bm{\mu}_2 = 0$ gives the $\GMI_f$ function. 

We now have all the pieces to state the precise pseudo-dimension bound.

\begin{theorem}\label{thm:LearnabilityOfOptimalMu1Mu2}
    Let $T(I,\bm{\mu})$ denote the tree size of the branch-and-cut algorithm after adding the cut induced by the cut generating function $\pi_{f,s_1,s_2}^{p,q}(\cdot)$ at the root for a given instance $I \in \I$, where $s_1,s_2$ are given by the mappings \eqref{eq:MappingFromMu1Mu2ToS1S2} based on $\bm{\mu}_1,\bm{\mu}_2$, and $f$ is determined by $I$. Then, the pseudo-dimension is given by
    \begin{equation*}
        \Pdim\left(\left\{T(\cdot,\bm{\mu}):\I \rightarrow [0,B] \mid \bm{\mu}\in [0,1]^2 \right\}\right) = \O\left( n^2 \log((m+n)\varrho) \right). 
    \end{equation*}
\end{theorem}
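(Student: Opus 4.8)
The plan is to invoke Lemma~\ref{lem:pseudo-dimension}, so it suffices to show that for every fixed instance $I \in \I$, the parameter space $[0,1]^2$ of $\bm{\mu} = (\bm{\mu}_1, \bm{\mu}_2)$ can be carved into regions by a bounded number of low-degree polynomial surfaces such that $T(I, \bm{\mu})$ is constant on each region. First I would recall that, for a fixed $I$, the cut $c(I, \bm{\mu})$ is obtained by applying $\pi_{f,s_1,s_2}^{p,q}$ to the finitely many entries $r_1, \dots, r_n$ of a single tableau row (after reducing to fractional parts), where $f$ is fixed by $I$ and $(s_1, s_2)$ depends affinely on $(\bm{\mu}_1,\bm{\mu}_2)$ via~\eqref{eq:MappingFromMu1Mu2ToS1S2}. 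Proposition~\ref{prop:LinearDecompositionsOfPiWithRespectToMu1Mu2} then gives a decomposition of the $(s_1,s_2)$-space by at most $n$ hyperplanes so that each coordinate $\pi_{f,s_1,s_2}^{p,q}(r_i)$ is a fixed affine function of $(s_1,s_2)$ on each cell; pulling back through the affine change of variables, this is a decomposition of $[0,1]^2$ by at most $n$ lines into cells on each of which the cut coefficient vector $\bm{\alpha}(\bm{\mu})$ (and the right-hand side $\beta(\bm{\mu})$, which here equals $1$ before substituting out slacks, or an affine function after) is affine in $\bm{\mu}$.

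The crux is then the standard ``cut $\mapsto$ branch-and-cut tree'' argument from~\cite{balcan2021sample,balcan2021improved,balcan2022structural}: within one affine cell, the cut is $\bm{\alpha}(\bm{\mu})^\T \x \le \beta(\bm{\mu})$ with $\bm{\alpha}, \beta$ affine in the two-dimensional parameter, and one shows the branch-and-cut tree produced on $I$ is locally constant in $\bm{\mu}$ outside a further arrangement of surfaces. The reason is that the entire execution of branch-and-cut on $I$ (with this one added cut) is governed by finitely many sign comparisons: which LP vertex is optimal at each node, which variable is branched on, whether a node is pruned by bound or infeasibility, etc. Each such decision is a comparison between two quantities that are \emph{rational} functions of $\bm{\mu}$ — indeed, LP optimal values and vertex coordinates at a node are ratios of determinants whose entries are affine in $\bm{\mu}$ (only the single row involving $\bm{\alpha}(\bm{\mu}), \beta(\bm{\mu})$ varies), so by Cramer's rule each is a quotient of two polynomials in $(\bm{\mu}_1,\bm{\mu}_2)$ of degree at most $n+1$ or so, with controllably-signed denominator. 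Counting: there are at most $\O((m+n)^{?})$ candidate bases / branching variables at each of at most $B$ nodes, but more carefully the relevant bound is polynomial in $m+n$ with the magnitude $\varrho$ entering because the integer feasible region is contained in a box of side $\varrho$, bounding the number of nodes and the bit-complexity, hence the degrees and number of the governing rational functions. This yields $\Gamma = \mathrm{poly}(m,n,\varrho)$ surfaces of degree $a = \mathrm{poly}(n)$, together with the $\le n$ lines from Proposition~\ref{prop:LinearDecompositionsOfPiWithRespectToMu1Mu2}; across these $\Gamma$ surfaces and on each resulting cell $T(I,\bm{\mu})$ is constant.

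Plugging into Lemma~\ref{lem:pseudo-dimension} with $d = 2$ gives $\Pdim = \O(d \log(\Gamma a)) = \O(\log(\mathrm{poly}(m,n,\varrho))) = \O(\log((m+n)\varrho))$ from the branch-and-cut part, and the $n$ hyperplanes of Proposition~\ref{prop:LinearDecompositionsOfPiWithRespectToMu1Mu2} contribute only within the $\log$; to recover the stated $\O(n^2 \log((m+n)\varrho))$ bound one accounts for the fact that the piecewise-affine cut has up to $\O(n)$ affine pieces per coordinate and hence up to $\O(n^2)$ cells in the $\bm{\mu}$-plane, and within each we repeat the tree-size analysis — so the total surface count is $\mathrm{poly}(n) \cdot \mathrm{poly}(m,n,\varrho)$ and, more importantly, the \emph{degrees} of the governing rational functions scale like $n^2$ because the cut coefficients, while affine in $\bm{\mu}$ on each cell, must be tracked through determinants of size up to $n$, and composing with the min/max structure of $\pi$ across the $p+q$ affine pieces inflates the effective degree; being slightly generous and bounding everything by $n^2$ inside the logarithm absorbs these factors and matches the theorem. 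The main obstacle I anticipate is precisely this bookkeeping — making rigorous that the number and degree of the rational ``decision boundary'' functions governing the branch-and-cut execution is polynomially bounded in $m$, $n$, $\varrho$ (this uses the uniform bound $\varrho$ on the feasible box to bound the number of tree nodes and the integrality/bit-size of all intermediate LPs) and showing the denominators can be taken strictly positive so Lemma~\ref{lem:pseudo-dimension} applies verbatim; the piecewise-linearity of the cut itself (Proposition~\ref{prop:LinearDecompositionsOfPiWithRespectToMu1Mu2}) is the easy input, already supplied.
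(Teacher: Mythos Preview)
Your overall architecture matches the paper's: decompose $[0,1]^2$ via Proposition~\ref{prop:LinearDecompositionsOfPiWithRespectToMu1Mu2} into cells where $\bm{\mu} \mapsto (\bm{\alpha}(\bm{\mu}),\beta(\bm{\mu}))$ is affine, then overlay the polynomial surfaces that govern how the branch-and-cut tree depends on the cut $(\bm{\alpha},\beta)$, and finish with Lemma~\ref{lem:pseudo-dimension}. The paper does exactly this, except that for the second step it invokes a black-box result (Theorem~4.5 of \cite{balcan2022structural}, quoted as Lemma~\ref{lem:Balcan2022}): for a fixed instance, there are at most $\Gamma = \O\!\bigl((14)^n(m+2n)^{3n^2}\varrho^{5n^2}\bigr)$ degree-$5$ polynomials in the $(n{+}1)$-dimensional cut space partitioning it into constant-tree-size regions. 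Composing each of these with the affine map on each of the $K = \O(n^2)$ cells gives $\O(K\Gamma)$ degree-$5$ polynomials in $\bm{\mu}$, and Lemma~\ref{lem:pseudo-dimension} with $d=2$ yields $\O\bigl(\log(K\Gamma)\bigr) = \O\bigl(n^2\log((m+n)\varrho)\bigr)$.

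The genuine gap in your proposal is the count of $\Gamma$ and, consequently, your explanation of the $n^2$ factor. You assert $\Gamma = \mathrm{poly}(m,n,\varrho)$ and then correctly observe that this gives only $\O(\log((m+n)\varrho))$ --- too small --- and try to recover the missing $n^2$ from the $\O(n^2)$ cells or from polynomial degrees scaling like $n^2$. Neither works: $\log\bigl(n^2 \cdot \mathrm{poly}\bigr)$ is still $\O(\log((m+n)\varrho))$, and the degrees do not in fact blow up (a determinant with a single affine row is itself affine in $\bm{\mu}$, and the paper's black-box gives degree $5$, not $n^2$). The $n^2$ lives in the \emph{exponent} of $\Gamma$: the number of polynomial surfaces controlling branch-and-cut behavior is $((m+n)\varrho)^{\Theta(n^2)}$, not polynomial, essentially because one must account for all possible LP bases at all nodes of all possible tree shapes. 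Taking $\log$ converts this exponent into the multiplicative $n^2$ in the bound. So either cite \cite{balcan2022structural} for the precise $\Gamma$, as the paper does, or redo that combinatorial count carefully; the handwave to ``$\mathrm{poly}$'' is exactly where the argument breaks.
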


\section{$k$-dimensional cut generation functions}\label{sec:LearnabilityOfKDimCGF}

In this section, we present a family of $k$-dimensional cut generating functions, for arbitrary $k\geq 2$, that we believe satisfy the three criteria laid out in the Introduction, i.e., cutting planes can be obtained efficiently from them (Theorem~\ref{thm:TrivialLifting} below), sample complexity (pseudo-dimension) bounds can be established rigorously (Theorem~\ref{thm:LearnabilityOfKDimCGF} below), and they result in significantly smaller tree sizes compared to traditional cutting planes (Section~\ref{sec:computational}). We note that this particular family of cut generating functions has not been studied previously in the literature (from a theory or computational perspective), though they are a subclass of cut generating functions studied in~\cite{basu2019can}.

\subsection{The construction}\label{sec:kDimCGFConstruction}
For any $\f \in [0,1)^k\backslash\{\0\}$ and $\bm{\mu}=\begin{bmatrix}\bm{\mu}_1,\dots,\bm{\mu}_k\end{bmatrix}^\T \in \Delta_k^\tau$ with a universal large constant $\tau \geq 2m$, let 
\begin{equation*}
    \a^0 = \frac{\sum_{i=1}^{k}\bm{\mu}_i\e^i}{\sum_{i=1}^{k}\bm{\mu}_i\f_i},\ \a^1 = \frac{1}{\f_1-1}\e^1, ...,\ \a^k=\frac{1}{\f_k-1}\e^k \in \R^k,
\end{equation*} 
where $\e^i$ denotes the $i$-th standard basis vector in $\R^k$. Define $\pi_{\f,\bm{\mu}}: \R^k \to \R$ by $$\pi_{\f,\bm{\mu}}(\r) := \min_{\z \in \Z^k} \max_{i=0, \ldots, k} \langle \a^i, \r + \z\rangle.$$ Using well-known results from cut generating function theory, it can be shown that $\pi_{\f,\bm{\mu}}$ satisfies the three conditions in Section~\ref{sec:IP-background} to qualify as a cut generating function;  see, for example, the analysis in~\cite{basu2019can}. 

It is noteworthy that for each $\boldsymbol{\bm{\mu}} = \mathbf{e}^i$ with $i \in \{1, \dots, k\}$, the function $\pi_{\mathbf{f}, \boldsymbol{\bm{\mu}}}$ is equivalent to the one-dimensional GMI function ${\GMI}_{\mathbf{f}_i}$, defined in Section~\ref{sec:IP-background}. 

\subsection{Computation and pseudo-dimension}
\cref{alg:TrivialLifting} shows how to compute the function values $\pi_{\mathbf{f}, \boldsymbol{\bm{\mu}}}(\r)$ (the cutting plane coefficients), in time that is linear in the dimension $k$. 
We then expose an important piecewise structure of the corresponding family of cutting planes for~\eqref{eq:IP_standard_form} in \cref{prop:PiecewiseStructureOfGamma}. This piecewise structure is the key to establishing upper bounds on the pseudo-dimension (recall \cref{lem:pseudo-dimension}) for learning the optimal cut generating function from this family in \cref{thm:LearnabilityOfKDimCGF}.

\begin{algorithm}
    \caption{Computation of $\pi_{\f,\bm{\mu}}(\r)$}\label{alg:TrivialLifting}
    \begin{algorithmic}[1]
    \State \textbf{Input:} $k \in \mathbb{N}_+ \cap [2, \infty)$, $\f \in [0,1)^k\backslash\{\0\}$, $\bm{\mu} = [\bm{\mu}_1,\dots,\bm{\mu}_k]^\T \in \Delta_k^\tau$, $\r \in \mathbb{R}^k$
    \State \textbf{Output:} $\pi_{\f,\bm{\mu}}(\r)$
    \State $\bar{\r} \gets \r - \lfloor \r \rfloor - \sum_{i=1}^k \mathbbm{1}(\r_i \geq \f_i + \lfloor \r_i \rfloor) \mathbf{e}^i$ \Comment{$\mathbbm{1}(\cdot)$ is the indicator function}
    \State $p \gets \sum_{i=1}^{k}\bm{\mu}_i \bar{\r}_i$
    \State $q \gets \sum_{i=1}^{k}\bm{\mu}_i \f_i$
    \State $\s \gets \begin{bmatrix}\frac{\bar{\r}_1}{\f_1-1},\dots,\frac{\bar{\r}_k}{\f_k-1}\end{bmatrix}$
    \State $a \gets \max \left\{ \s_i:i\in\{1,\dots,k\} \right\}$
    \State $i^* \gets \argmax \left\{ \s_i:i\in\{1,\dots,k\} \right\}$
    \State $b \gets \max \left\{ \s_i:i\in\{1,\dots,k\}\backslash\{i^*\} \right\}$
    \State $\lambda^* \gets \frac{\bar{\r}_{i^*} q - (\f_{i^*}-1)p}{\bm{\mu}_{i^*}(\f_{i^*}-1)-q}$
    \State $\pi_{\f,\bm{\mu}}(\r) \gets \min \left\{ \max \left\{ \frac{p+\bm{\mu}_{i^*} \lceil \lambda^* \rceil}{q}, \frac{\bar{\r}_{i^*} + \lceil \lambda^* \rceil}{\f_{i^*}-1}, b \right\},\max \left\{ \frac{p+\bm{\mu}_{i^*} \lfloor \lambda^* \rfloor}{q}, \frac{\bar{\r}_{i^*} + \lfloor \lambda^* \rfloor}{\f_{i^*}-1}, b \right\},\max\left\{ \frac{p}{q},a \right\} \right\}$
    \end{algorithmic}
\end{algorithm}

\begin{theorem}\label{thm:TrivialLifting}
    For any $\f \in [0,1)^k\backslash\{\0\}$, $\tau\geq 2k$, and $\bm{\mu} \in \Delta_k^\tau$, \cref{alg:TrivialLifting} computes $\pi_{\f,\bm{\mu}}(\r)$ in $\O(k)$ time. Therefore, the cutting plane from $\pi_{\f,\bm{\mu}}$ can be computed in $\O(kn)$ time. 
\end{theorem}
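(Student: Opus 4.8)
The plan is to show that Algorithm~\ref{alg:TrivialLifting} returns exactly $\pi_{\f,\bm{\mu}}(\r)=\min_{\z\in\Z^k}\max_{i=0,\dots,k}\langle\a^i,\r+\z\rangle$ and that it does so with $\O(k)$ arithmetic operations; the $\O(kn)$ bound for the cutting plane then follows because, by~\eqref{eq:valid_cut} (see also Lemma~\ref{lem:CutIsLinear}), the cut is the inequality $\sum_{i=1}^n\pi_{\f,\bm{\mu}}(\r^i)\,\y^N_i\ge 1$ rewritten in the original variables, so its $n$ coefficients are $n$ evaluations of the algorithm plus an $\O(kn)$ back-substitution of the slacks. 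Throughout I write $\langle\a^0,\r+\z\rangle=\langle\bm{\mu},\r+\z\rangle/\langle\bm{\mu},\f\rangle$ and $\langle\a^j,\r+\z\rangle=(\r_j+\z_j)/(\f_j-1)$ for $j\ge1$, noting $\f_j-1<0$ and that $q=\langle\bm{\mu},\f\rangle>0$ since $\bm{\mu}\in\Delta_k^\tau$ has all entries at least $1/\tau$ and $\f\ne\0$.

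First, a reduction by periodicity: the vector $\bar{\r}$ computed by the algorithm satisfies $\bar{\r}-\r\in\Z^k$, so substituting $\z\mapsto\z+(\bar{\r}-\r)$ gives $\pi_{\f,\bm{\mu}}(\r)=\min_{\z\in\Z^k}g(\z)$ with $g(\z):=\max_{i=0,\dots,k}\langle\a^i,\bar{\r}+\z\rangle$, and by construction $\bar{\r}_i\in[\f_i-1,\f_i)$ for all $i$. The heart of the proof is then the structural claim that $g$ attains its integer minimum on the coordinate axis through $\e^{i^*}$, where $i^*=\argmax_i\s_i$ with $\s_i=\bar{\r}_i/(\f_i-1)$: there is a minimizer of the form $c\,\e^{i^*}$, $c\in\Z$. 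I would prove this as follows. Let $v^\star=\min_\z g(\z)$ and put $\z^\star_i:=\lceil v^\star(\f_i-1)-\bar{\r}_i\rceil$; comparing with any exact minimizer $\hat{\z}$ (which has $\hat{\z}_i\ge\z^\star_i$ coordinatewise because the $\lceil\cdot\rceil$'s are the least integers making each $\langle\a^i,\bar{\r}+\cdot\rangle\le v^\star$ for $i\ge1$, and $\langle\bm{\mu},\cdot\rangle$ is increasing) shows $\z^\star$ is itself a minimizer. It remains to see $\z^\star_j=0$ for $j\ne i^*$. The upper bound $v^\star\le\max\{p/q,a\}\le 1$ holds by evaluating $g$ at $\z=\0$ and using $\bar{\r}_i<\f_i$ (so $p/q<1$) together with $\bar{\r}_i\ge\f_i-1$ (so $\s_i\le1$, hence $a=\max_i\s_i\le1$); this gives $v^\star<(\bar{\r}_j-1)/(\f_j-1)$ and therefore $\z^\star_j\ge0$. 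The lower bound $v^\star\ge b$, where $b=\max_{j\ne i^*}\s_j$, gives $v^\star\ge\s_j$ and therefore $\z^\star_j\le0$ for $j\ne i^*$. Together $\z^\star_j=0$ for all $j\ne i^*$.

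With the minimizer supported on $\e^{i^*}$, it remains to minimize over $c\in\Z$ the convex piecewise-affine function $g(c\,\e^{i^*})=\max\{(p+\bm{\mu}_{i^*}c)/q,\ (\bar{\r}_{i^*}+c)/(\f_{i^*}-1),\ b\}$ (here $p=\langle\bm{\mu},\bar{\r}\rangle$ and $a=\s_{i^*}$). Its first inner piece is increasing in $c$, its second decreasing, and the third constant, so a continuous minimizer is their crossover $\lambda^*$, the root of $(\f_{i^*}-1)(p+\bm{\mu}_{i^*}\lambda)=q(\bar{\r}_{i^*}+\lambda)$ --- exactly the $\lambda^*$ of Algorithm~\ref{alg:TrivialLifting}, which is well-defined since $\bm{\mu}_{i^*}(\f_{i^*}-1)-q<0$. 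Convexity then gives $\min_{c\in\Z}g(c\,\e^{i^*})=\min\{g(\lfloor\lambda^*\rfloor\e^{i^*}),\,g(\lceil\lambda^*\rceil\e^{i^*})\}$, and adjoining the (harmless) value $g(\0)=\max\{p/q,a\}$ reproduces exactly the three-way minimum in the last line of the algorithm; this proves correctness. For the running time, forming $\bar{\r}$, then $p,q,\s$, and then the largest and second-largest entries of $\s$ (hence $a,i^*,b$) are each a single $\O(k)$ pass, while $\lambda^*$ and the final minimum of three maxima of three scalars cost $\O(1)$; so the algorithm runs in $\O(k)$ time, and $n$ evaluations plus the slack back-substitution of~\eqref{eq:valid_cut} cost $\O(kn)$.

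I expect the structural claim --- that perturbing only the coordinate $i^*$ suffices --- to be the main obstacle, and within it the bound $v^\star\ge b$ to be the delicate point: it is equivalent to $g(\z)\ge b$ for every $\z\in\Z^k$, and I would approach it by a case analysis on the signs of the coordinates of $\z$ (if some $\z_i\le-1$ then $\bar{\r}_i+\z_i<\f_i-1<0$ forces $\langle\a^i,\bar{\r}+\z\rangle>1\ge b$; if $\z\ge\0$, the term indexed by $i^*$ or by the coordinate attaining $b$ tends to supply the bound, with the remaining case requiring a more careful estimate of $\langle\a^0,\bar{\r}+\z\rangle$). This is precisely the place where the specific normalization in the definition of $\bar{\r}$ (subtracting $1$ from coordinate $i$ exactly when $[\r_i]\ge\f_i$) and the strict positivity of the weights $\bm{\mu}$ are used in an essential way.
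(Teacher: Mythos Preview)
Your approach diverges from the paper's in a fundamental way. The paper does not prove directly that an integer minimizer of $g(\z)=\max_i\langle\a^i,\bar{\r}+\z\rangle$ is supported on a single coordinate axis; instead it invokes external lifting-region results (citing \cite{dey2010two,basu2015operations,basu2013unique} and Theorem~2.2 of \cite{basu2019can}) to assert that some minimizer lies in $\widetilde R=\bigcup_{i=1}^k\bigl([\f-\1,\f]+\R_+\e^i\bigr)$, whose intersection with the coset $\bar{\r}+\Z^k$ consists precisely of the points $\bar{\r}+\lambda\e^i$ with $\lambda\in\Z_+$. From there a short case split over $i$ (the monotone case $i\neq i^*$ yielding $\max\{p/q,a\}$ at $\lambda=0$, and the convex one-dimensional case $i=i^*$ yielding $\lfloor\lambda^*\rfloor$ or $\lceil\lambda^*\rceil$) finishes. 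Your elementary argument would bypass that literature, which is appealing, but it rests entirely on the inequality $v^\star\ge b$ that you flag as the delicate step.

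That inequality is false in general, so your approach cannot be completed as written. Take $k=3$, $\f=(\tfrac1{10},\tfrac1{10},\tfrac9{10})$, $\bm{\mu}=(\tfrac13,\tfrac13,\tfrac13)\in\Delta_3^6$, and $\bar{\r}=(-0.89,-0.5,-0.05)$ (each $\bar{\r}_i\in[\f_i-1,\f_i)$). Then $\s=(89/90,\,5/9,\,1/2)$, so $i^*=1$ and $b=5/9\approx0.556$. But at $\z=(1,1,0)$ one gets $\bar{\r}+\z=(0.11,0.5,-0.05)$ and
\[
g(\z)=\max\Bigl\{\tfrac{0.56}{1.1},\ \tfrac{0.11}{-0.9},\ \tfrac{0.5}{-0.9},\ \tfrac{-0.05}{-0.1}\Bigr\}=\tfrac{28}{55}\approx0.509<b.
\]
Hence $v^\star<b$, and your canonical minimizer satisfies $\z^\star_2=\lceil v^\star(\f_2-1)-\bar{\r}_2\rceil\ge 1$: it is not supported on $\e^{i^*}$ alone, and the reduction to a single one-dimensional search along $\e^{i^*}$ collapses. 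The case analysis you sketch cannot close this gap, because in the offending regime ($\z\ge\0$ with $\z_{i^*}\ge1$ and $\z_j\ge1$) none of the $k+1$ inner terms is forced above $b$. The paper avoids this obstacle altogether by importing the lifting-region containment $R\subseteq\widetilde R$ from the literature rather than attempting to establish an axis-support property of the minimizer from scratch.
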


\begin{proposition}\label{prop:PiecewiseStructureOfGamma}
    For any fixed $k \in \Z\cap[2,+\infty]$, $\f \in [0,1)^k\backslash\{\0\}$, $\r^1,\dots,\r^{n} \in \R^k$, and $\tau \geq 2k$, there exists a decomposition of $\Delta_k^\tau$ obtained by at most $2n(\tau+3)^2$ hyperplanes such that within each region, each coordinate of the cutting plane $\left[ \pi_{\f,\bm{\mu}}(\r^1), \dots, \pi_{\f,\bm{\mu}}(\r^{n}) \right]^{\mathsf{T}}$ is a fixed rational function given by the quotient of two affine linear functions of $\bm{\mu}$, where the denominator is always a fixed positive function of $\bm{\mu}$.
\end{proposition}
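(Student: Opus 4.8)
The plan is to take the closed-form expression for $\pi_{\f,\bm{\mu}}(\r)$ returned by \cref{alg:TrivialLifting} (which is correct by \cref{thm:TrivialLifting}) and track its dependence on $\bm{\mu}$ for each fixed $\r=\r^j$ and the fixed data $\f$. First I would record that almost every quantity computed by the algorithm is independent of $\bm{\mu}$: the reduced vector $\bar{\r}$ (assembled from $\r^j$, $\f$, floors, and indicators), the vector $\s$, and hence the scalars $a,b$ and the index $i^*$, depend only on $\r^j$ and $\f$. The genuinely $\bm{\mu}$-dependent quantities are the affine linear functions $p(\bm{\mu})=\sum_i\bm{\mu}_i\bar{\r}_i$ and $q(\bm{\mu})=\sum_i\bm{\mu}_i\f_i$, and the ratio $\lambda^*(\bm{\mu})=\frac{\bar{\r}_{i^*}q-(\f_{i^*}-1)p}{\bm{\mu}_{i^*}(\f_{i^*}-1)-q}$ of two affine linear functions. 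Two properties of denominators will be used throughout: on $\Delta_k^\tau$ we have $q(\bm{\mu})>0$, since $\f\neq\0$ forces some $\f_{i_0}>0$ and $\bm{\mu}_{i_0}\geq 1/\tau$; and the denominator of $\lambda^*$, namely $D(\bm{\mu}):=\bm{\mu}_{i^*}(\f_{i^*}-1)-q(\bm{\mu})$, satisfies $-D(\bm{\mu})=\bm{\mu}_{i^*}+\sum_{i\neq i^*}\bm{\mu}_i\f_i\geq\bm{\mu}_{i^*}\geq 1/\tau$, so $D$ is strictly negative and bounded away from $0$ on $\Delta_k^\tau$.

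The only thing standing between $\pi_{\f,\bm{\mu}}(\r^j)$ and being a ratio of affine functions is the pair of roundings $\lceil\lambda^*\rceil,\lfloor\lambda^*\rfloor$ in the last line of the algorithm. A short estimate (using $\bar{\r}_i\in[\f_i-1,\f_i)$, $q\in(0,1)$, and $|p|\leq 1$ on $\Delta_k^\tau$) bounds the numerator of $\lambda^*$ by $2$ in absolute value, and together with $|D|\geq 1/\tau$ this yields $|\lambda^*(\bm{\mu})|\leq 2\tau$ for every $\bm{\mu}\in\Delta_k^\tau$. Hence $\lceil\lambda^*\rceil$ and $\lfloor\lambda^*\rfloor$ each take values in a fixed set of $O(\tau)$ integers, and both are constant on every cell of $\Delta_k^\tau$ obtained by removing the $O(\tau)$ hyperplanes $\{\,\bar{\r}_{i^*}q(\bm{\mu})-(\f_{i^*}-1)p(\bm{\mu})=z\,D(\bm{\mu})\,\}$, $z\in\Z\cap[-2\tau,2\tau]$, which are precisely the loci $\lambda^*=z$ where these step functions jump.

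On a cell $R$ where $\lceil\lambda^*\rceil=z^c$ and $\lfloor\lambda^*\rfloor=z^f$ are fixed integers, the last line of the algorithm becomes $\pi_{\f,\bm{\mu}}(\r^j)=\min\{\max\{g_1,c_1,b\},\max\{g_2,c_2,b\},\max\{g_3,a\}\}$, where $g_1=\frac{p+z^c\bm{\mu}_{i^*}}{q}$, $g_2=\frac{p+z^f\bm{\mu}_{i^*}}{q}$, $g_3=\frac{p}{q}$ share the common positive affine denominator $q$ and have affine numerators, while $c_1=\frac{\bar{\r}_{i^*}+z^c}{\f_{i^*}-1}$, $c_2=\frac{\bar{\r}_{i^*}+z^f}{\f_{i^*}-1}$, $a$, $b$ are constants. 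Each comparison needed to resolve the two inner maxima and the outer minimum is between two of these seven quantities, and is of one of three kinds: (i) between two constants, hence identically true or false on $R$; (ii) between two of the $g$'s, again decided on all of $R$, since $g_1\geq g_2$ is equivalent to $z^c\geq z^f$ and each comparison $g_\ell$ versus $g_3$ reduces to comparing the relevant integer with $0$; (iii) between a $g$ and a constant, which after multiplying by $q>0$ becomes an affine linear inequality in $\bm{\mu}$. Refining $R$ by the finitely many families of hyperplanes arising in case (iii) — each family indexed by an integer in $[-2\tau,2\tau]$, hence of size $O(\tau)$ for fixed $\r^j$ — yields cells on which $\pi_{\f,\bm{\mu}}(\r^j)$ equals a single one of $g_1,g_2,g_3,c_1,c_2,a,b$, i.e. a quotient of two affine linear functions of $\bm{\mu}$ whose denominator is the fixed positive function $q(\bm{\mu})$ (a constant $c$ being written as $c\,q(\bm{\mu})/q(\bm{\mu})$). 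Taking the union of all these hyperplanes over $j=1,\dots,n$ produces $O(n\tau)$ hyperplanes, comfortably within the claimed bound $2n(\tau+3)^2$.

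The hard part is the integer rounding: without a bound on the range of $\lambda^*$ one would need infinitely many level-set hyperplanes. The crucial ingredient that rescues the argument is the a priori estimate $|\lambda^*(\bm{\mu})|\leq 2\tau$, which comes from $|D(\bm{\mu})|\geq\bm{\mu}_{i^*}\geq 1/\tau$ and therefore genuinely exploits the constraint $\bm{\mu}\in\Delta_k^\tau$. The rest is routine bookkeeping: confirming that each comparison locus is an honest affine hyperplane (because the only denominators appearing, $q$ and $\f_{i^*}-1$, have fixed sign on $\Delta_k^\tau$) and that the total count of distinct hyperplanes stays $O(n\tau)$, which is below $2n(\tau+3)^2$.
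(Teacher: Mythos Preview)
Your argument is correct and actually yields a sharper hyperplane count than the paper's. The paper takes a more direct enumeration route: rather than working with the output of \cref{alg:TrivialLifting} and freezing $\lceil\lambda^*\rceil,\lfloor\lambda^*\rfloor$, it goes back to the proof of \cref{thm:TrivialLifting} and uses the lifting-region inclusion $G(\f,\bm{\mu})\subseteq\bigcup_i\bigl([\f_1-1,\f_1]\times\cdots\times[\f_k-1,\f_k]+\{\lambda\e^i:\lambda\in[0,1/\bm{\mu}_i]\}\bigr)$ together with $\bm{\mu}_i\geq 1/\tau$ to rewrite $\pi_{\f,\bm{\mu}}(\r^j)$ as a min--max over the finite range $\lambda\in\{0,1,\dots,\tau\}$. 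This produces a fixed list of $2\tau+5$ candidate values---$p_j/q$, $a_j$, $b_j$, and the pairs $(p_j+\bm{\mu}_{i^*_j}\lambda)/q$, $(\bar{\r}^j_{i^*_j}+\lambda)/(\f_{i^*_j}-1)$ for each $\lambda$---and compares all of them pairwise, giving at most $(2\tau+5)(2\tau+6)/2\leq 2(\tau+3)^2$ hyperplanes per $j$. Your approach instead exploits that the algorithm has already narrowed the search to $\lambda\in\{\lfloor\lambda^*\rfloor,\lceil\lambda^*\rceil\}$; the price is that these integers vary with $\bm{\mu}$, which you handle via the algebraic estimate $|\lambda^*|\leq 2\tau$ (from $|D(\bm{\mu})|\geq\bm{\mu}_{i^*}\geq 1/\tau$) and the associated $O(\tau)$ level-set hyperplanes. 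After freezing, only seven values remain, and since $z^f\in\{z^c-1,z^c\}$ the pairs $(z^c,z^f)$ also range over $O(\tau)$ values, so the comparison hyperplanes total $O(\tau)$ per $j$ and $O(n\tau)$ overall---asymptotically below $2n(\tau+3)^2$. The paper's enumeration is cruder but avoids the two-stage refinement; your route is tighter but requires a bit more bookkeeping. One small remark: your phrase ``each family indexed by an integer in $[-2\tau,2\tau]$'' glosses over the two cross-comparisons $g_1$ vs.\ $c_2$ and $g_2$ vs.\ $c_1$, which formally depend on the pair $(z^c,z^f)$; but as there are still only $O(\tau)$ such pairs, the count is unaffected.
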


\begin{theorem}\label{thm:LearnabilityOfKDimCGF}
    For any fixed $k \in \Z \cap [2,+\infty]$, let $T^k(I,\bm{\mu})$ denote the tree size of the branch-and-cut algorithm after adding the cut induced by $\pi_{\f,\bm{\mu}}$ at the root for a given instance $I \in \I$, where $\f$ is determined by $I$. Then, the pseudo-dimension is given by 
    \begin{equation*}
        \Pdim\left(\left\{T^k(\cdot,\bm{\mu}):\I \rightarrow [0,B] \mid \bm{\mu}\in\Delta_k^\tau \right\}\right) = \bigO{kn^2\log((m+n)\varrho)+k^2\log(n\tau)}. 
    \end{equation*}
\end{theorem}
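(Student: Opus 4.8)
The plan is to bound the pseudo-dimension via \cref{lem:pseudo-dimension} applied with the parameter space $\P=\Delta_k^\tau\subseteq\R^k$, so that $d=k$. It then suffices to exhibit, for every fixed instance $I\in\I$, a collection of at most $\Gamma$ rational functions on $\Delta_k^\tau$ — each a quotient of polynomials of degrees at most $a$ and $b$ with strictly positive denominator — whose sign arrangement refines the level sets of $T^k(I,\cdot)$, and then to verify that $k\log(\Gamma a)=O\big(kn^2\log((m+n)\varrho)+k^2\log(n\tau)\big)$. The argument runs in close parallel to the proof of \cref{thm:LearnabilityOfOptimalMu1Mu2}, with \cref{prop:PiecewiseStructureOfGamma} taking over the role of \cref{prop:LinearDecompositionsOfPiWithRespectToMu1Mu2}; the extra factor of $k$ in the second term arises because the cut-coefficient decomposition now lives in $\R^k$ rather than $\R^2$.

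The first step concerns the cut-coefficient layer. Fix $I$, which fixes $\f$ and the selected tableau rows $\r^1,\dots,\r^n\in\R^k$ in \eqref{eq:equivalent_simplex_tableau}. By \cref{prop:PiecewiseStructureOfGamma} there are at most $H:=2n(\tau+3)^2$ hyperplanes partitioning $\Delta_k^\tau$ into regions on each of which every coordinate of $[\pi_{\f,\bm{\mu}}(\r^1),\dots,\pi_{\f,\bm{\mu}}(\r^n)]^{\mathsf{T}}$ equals a fixed affine function of $\bm{\mu}$ over a fixed positive affine function $q(\bm{\mu})$ (positive on all of $\Delta_k^\tau$ since $\bm{\mu}_i\ge 1/\tau$). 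Because multiplying a valid inequality by the positive scalar $q(\bm{\mu})$ changes neither validity nor any LP feasible region visited by branch-and-cut — hence not the tree produced — I would replace the cut $\sum_i\pi_{\f,\bm{\mu}}(\r^i)\y^N_i\ge 1$ by its $q(\bm{\mu})$-rescaling, whose coefficients are affine in $\bm{\mu}$ on each region; and since substituting out the slacks is a fixed linear map (\cref{lem:CutIsLinear}), the induced cutting plane $(\bm{\alpha}(\bm{\mu}),\beta(\bm{\mu}))\in\R^{n+1}$ for \eqref{eq:IP_canonical_form} is an affine function of $\bm{\mu}$ on each of the at most $H$ regions.

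The second step is the tree layer. I would invoke the structural fact about branch-and-cut that also underpins \cref{thm:LearnabilityOfOptimalMu1Mu2} (in the spirit of \cite{balcan2021sample,cheng2024data}): for a fixed instance $I$, the truncated tree size regarded as a function of the cut vector $(\bm{\alpha},\beta)\in\R^{n+1}$ alone is piecewise constant, cut out by at most $\Gamma_0$ polynomial hypersurfaces of degree at most $a_0$ with $\log(\Gamma_0 a_0)=O(n^2\log((m+n)\varrho))$ — finiteness relying on every instance in $\I$ having an $\ell_\infty$-bounded feasible set. Combining the layers: refine $\Delta_k^\tau$ first by the $H$ region-hyperplanes, giving $O(H^k)$ cells; on each cell $C$ the cut vector is a fixed affine function $g_C$ of $\bm{\mu}$, so pulling each of the $\Gamma_0$ tree hypersurfaces back through $g_C$ yields a polynomial in $\bm{\mu}$ of degree at most $a_0$. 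Adjoining, over all cells $C$, these pulled-back polynomials to the $H$ original hyperplanes produces a family of $\Gamma=O(H^k\Gamma_0)$ polynomials of degree at most $a:=\max(1,a_0)$ such that on any sign-cell of this family one lies inside a single region $C$ and on a fixed side of every pulled-back hypersurface, hence $T^k(I,\bm{\mu})=T^k(I,g_C(\bm{\mu}))$ is constant there. Then \cref{lem:pseudo-dimension} with $d=k$ gives $\Pdim=O(k\log(\Gamma a))=O\big(k[k\log H+\log(\Gamma_0 a_0)]\big)=O\big(kn^2\log((m+n)\varrho)+k^2\log(n\tau)\big)$.

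Granting \cref{prop:PiecewiseStructureOfGamma} and the tree-layer fact, the remaining work is bookkeeping, but two points deserve care. First, the cut coefficients depend on $\bm{\mu}$ rationally, not polynomially; the clean resolution — the common denominator $q(\bm{\mu})$ is strictly positive on $\Delta_k^\tau$ and rescaling a cut leaves branch-and-cut unchanged — lets one pass to affine coefficients and keeps the pulled-back hypersurfaces of degree $O(1)$, which is what \cref{lem:pseudo-dimension} requires. Second, one must check that composing the two decompositions costs only a product together with the $O(H^k)$ cell count, and it is exactly this cell count that contributes the extra factor of $k$ in the final bound. I expect the genuinely delicate ingredient to be the tree-layer fact itself — that the map sending a cutting plane to the branch-and-cut tree it induces is piecewise constant with the stated, $\varrho$-controlled complexity — but this ingredient is inherited unchanged from the analysis already needed for \cref{thm:LearnabilityOfOptimalMu1Mu2} and is independent of the cut-generating-function family studied here, so the new effort in this theorem sits almost entirely in \cref{prop:PiecewiseStructureOfGamma} (already established) and in the careful composition described above.
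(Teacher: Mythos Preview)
Your proposal is correct and follows essentially the same architecture as the paper's proof: combine \cref{prop:PiecewiseStructureOfGamma} with the Balcan et al.\ tree-layer decomposition (\cref{lem:Balcan2022}, giving $\Gamma_0=\O((14)^n(m+2n)^{3n^2}\varrho^{5n^2})$ degree-$5$ polynomials), count the resulting $\bigO{H^k\Gamma_0}$ functions over $\Delta_k^\tau$, and invoke \cref{lem:pseudo-dimension} with $d=k$. The only deviation is your rescaling by the positive denominator $q(\bm{\mu})$ to pass from rational to affine cut coefficients before pulling back; the paper instead keeps the rational form $\frac{\p^i(\bm{\mu})}{q(\bm{\mu})}$ throughout (since \cref{lem:pseudo-dimension} already handles quotients with positive denominator), but this is a cosmetic simplification and both routes land on the same bound.
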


\section{Learnability of instance-dependent cut generation functions}\label{sec:LearnabilityOfInsDepCGF}

The authors in \cite{cheng2024data} studied the learnability of neural networks that select instance-dependent algorithms for any computational problem, as opposed to selecting a single algorithm that has the best expected performance, and applied this to the problem of selecting from the Chvatal-Gomory cutting plane family. Inspired by their work, this section discusses employing neural networks to dynamically select the most suitable cut generating function from a given family, tailored to each instance, as opposed to selecting a single cut generating function that has the best expected performance overall (as was done in Sections~\ref{sec:LearnabilityOf1DimCGF} and~\ref{sec:LearnabilityOfKDimCGF}). In other words, given access to samples from the instance distribution, we want to learn the parameters of the optimal neural network that will map instances to instance specific cut generating functions.

We define a neural network $\varphi_\ell:\R^d \times \R^W \rightarrow \R^\ell$ consisting of a fully connected architecture with ReLU activations, $L$ layers, $W$ parameters, $d$ input units, $\ell$ output units, and $U$ units in total. An encoder function $\Enc: \I \rightarrow \R^d$ is employed to transform instances $I=(A,\b,\c) \in \I$ into suitable neural network inputs, where a straightforward choice for $\Enc$ could be the flattening of $I$ into a vector, although more complex encoding strategies can be adopted to capture additional structural information about the instances. The primary goal is to input the encoded instance $\Enc(I)$ into the neural network, which then predicts parameters for the cut generating function that were discussed in \cref{sec:LearnabilityOf1DimCGF} and \cref{sec:LearnabilityOfKDimCGF}. This idea is supported by significant empirical evidence of performance improvements when enumerating cutting plane parameters in an instance-dependent manner, as demonstrated in \cref{tab:computation_1dCGF}. A direct application of their main theorem yields the following results: 

\begin{theorem} Let $h,h^k:\I \times \R^W \rightarrow [0,B]$ denote the branch-and-cut tree size after adding the cutting planes induced by corresponding cut generating functions, using parameters determined by the neural network described above. Formally, they are defined as $h(I,\w)=T(I, \varphi_2(\Enc(I),\w))$ and $h^k(I,\w) = T^k(I, \varphi_k(\Enc(I),\w))$, where $T$ and $T^k$ are the tree size functions defined in Theorems~\ref{thm:LearnabilityOfOptimalMu1Mu2} and~\ref{thm:LearnabilityOfKDimCGF} respectively. Then the pseudo-dimension of these two learning problems have the following upper bounds: 
    \begin{align*}
        \Pdim\left( \left\{ h(\cdot,\w) : \w \in \R^W \right\} \right) &= \bigO{LW \log(U+2)+n^2W\log((m+n)\varrho)},\\ 
        \Pdim\left( \left\{ h^k(\cdot,\w) : \w \in \R^W \right\} \right) &= \bigO{LW\log(U+k) + kW \log(n\tau) + n^2W \log((m+n)\varrho)}. 
    \end{align*}
\end{theorem}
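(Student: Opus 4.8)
The plan is to combine the piecewise-structure results for the cut generating functions (Propositions~\ref{prop:LinearDecompositionsOfPiWithRespectToMu1Mu2} and~\ref{prop:PiecewiseStructureOfGamma}) with the known pseudo-dimension bound for the composition of a ReLU network with a ``simple'' downstream function class, exactly as in~\cite{cheng2024data}. Fix an instance $I \in \I$; then $\Enc(I) \in \R^d$ is a fixed vector, and the map $\w \mapsto \varphi_\ell(\Enc(I), \w)$ is, for fixed input, a piecewise polynomial (indeed piecewise multilinear in the usual layer-by-layer sense) function of the $W$ network weights. The standard accounting for ReLU networks says the weight space $\R^W$ is partitioned by at most $(\text{poly in } U)^{LW}$ sign conditions (equivalently, $2^{O(LW\log(U+\ell))}$ activation regions), within each of which every output coordinate $\varphi_\ell(\Enc(I),\w)_j$ is a fixed polynomial in $\w$ of degree $O(L)$.

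First I would recall from Theorem~\ref{thm:LearnabilityOfOptimalMu1Mu2} (resp. Theorem~\ref{thm:LearnabilityOfKDimCGF}) that once the cut-generating-function parameter $\bm{\mu}$ is fixed, the truncated tree size $T(I,\bm{\mu})$ (resp. $T^k(I,\bm{\mu})$) is a piecewise-constant function of $\bm{\mu}$ whose pieces are cut out by a controlled number of surfaces: $O(n)$ hyperplanes in the coefficient-space decomposition coming from Proposition~\ref{prop:LinearDecompositionsOfPiWithRespectToMu1Mu2}, lifted through the tree-size boundary analysis to $O(n^2)$-ish many polynomial surfaces of bounded degree in the $(m+n)\varrho$ parameters (and analogously $O(kn^2)$ surfaces plus $O(k^2)$ simplex/rounding surfaces for the $k$-dimensional family). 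The point is that each such surface, as a condition on $\bm{\mu}$, is (piecewise) a polynomial equality of bounded degree. Composing: substitute $\bm{\mu} = \varphi_\ell(\Enc(I),\w)$. Inside a single activation region of the network, each coordinate of $\bm{\mu}$ is a degree-$O(L)$ polynomial in $\w$, so each tree-size boundary surface pulls back to a polynomial in $\w$ of degree $O(L \cdot \deg)$ where $\deg$ is the (constant) degree of that surface in $\bm{\mu}$; for the $k$-dimensional case one additionally clears the positive denominators supplied by Proposition~\ref{prop:PiecewiseStructureOfGamma}, which only multiplies degrees by a constant. Hence $h(I,\cdot)$ (resp. $h^k(I,\cdot)$) is piecewise constant on $\R^W$ with respect to a collection of at most $\big(2^{O(LW\log(U+\ell))}\big)\cdot\big(\text{poly}(n,m,\varrho)\big)^{O(W)}$ polynomial surfaces of degree $O(L)$ in the $W$ variables $\w$. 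Plugging this count into Lemma~\ref{lem:pseudo-dimension} (with $d \rightsquigarrow W$, $\Gamma \rightsquigarrow$ that surface count, $a \rightsquigarrow O(L)$) yields $\Pdim = O\big(W\log(\Gamma a)\big) = O\big(LW\log(U+2) + n^2 W\log((m+n)\varrho)\big)$ in the first case and $O\big(LW\log(U+k) + kW\log(n\tau) + n^2 W \log((m+n)\varrho)\big)$ in the second, which is exactly the claimed bound once one notes $\log(L)$ is absorbed.

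The routine but slightly delicate bookkeeping — and the step I expect to be the main obstacle — is making the ``number of pieces'' estimate genuinely uniform over all instances $I$ simultaneously, rather than instance by instance: the pseudo-dimension shattering argument in Lemma~\ref{lem:pseudo-dimension} needs, for $t$ candidate instances $I_1,\dots,I_t$ and thresholds $s_1,\dots,s_t$, a single partition of $\R^W$ on which all of $\sgn(h(I_j,\w)-s_j)$ are constant. This is obtained by overlaying the $t$ per-instance partitions, so the total surface count is at most $t$ times the single-instance count; carrying the factor $t$ through the Warren/Milnor-type zero-pattern bound and then solving $2^t \le (\text{stuff involving } t)$ for $t$ is exactly what Lemma~\ref{lem:pseudo-dimension} packages, so invoking it with the composite surface count is legitimate — but one must double-check that the thresholds $s_j$ only add a constant to the degree (they shift a piecewise-constant function, so they do not change the surfaces at all) and that the network's activation-region surfaces, which depend on $\Enc(I_j)$, are themselves polynomials in $\w$ of degree $O(L)$ (standard). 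A secondary point to verify is that the denominators appearing in Proposition~\ref{prop:PiecewiseStructureOfGamma} remain sign-definite after the substitution $\bm{\mu}=\varphi_k(\Enc(I),\w)$ restricted to $\bm{\mu}\in\Delta_k^\tau$ — but since the network output is constrained (or projected) to lie in $\Delta_k^\tau$ by construction, the positivity is inherited, so clearing denominators is harmless. With these uniformity checks in place, the bound follows by direct substitution into Lemma~\ref{lem:pseudo-dimension}, so the theorem is indeed a ``direct application'' of the machinery of~\cite{cheng2024data} together with our structural propositions.
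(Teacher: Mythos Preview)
Your overall plan matches the paper's approach (which is simply ``a direct application of the main theorem in~\cite{cheng2024data}''): compose the piecewise structure of $T(I,\cdot)$ and $T^k(I,\cdot)$ established in the proofs of Theorems~\ref{thm:LearnabilityOfOptimalMu1Mu2} and~\ref{thm:LearnabilityOfKDimCGF} with the piecewise-polynomial dependence of the ReLU network on its weights, and the uniformity and denominator checks you flag are the right ones. However, the specific route you take through Lemma~\ref{lem:pseudo-dimension} does \emph{not} yield the stated bound; it is off by a factor of $W$ in the network term.

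The problem is the step ``Plugging this count into Lemma~\ref{lem:pseudo-dimension} \ldots\ yields $O(LW\log(U+2)+\cdots)$.'' You take the per-instance surface count to be $\Gamma=2^{O(LW\log(U+\ell))}\cdot(\cdots)$, since each tree-size boundary must be instantiated separately inside each of the $2^{O(LW\log(U+\ell))}$ activation regions in order to be a genuine polynomial in $\w$. Lemma~\ref{lem:pseudo-dimension} then returns $\Pdim=O(W\log(\Gamma a))=O\big(W\cdot LW\log(U+\ell)+\cdots\big)=O\big(LW^{2}\log(U+\ell)+\cdots\big)$, not $O\big(LW\log(U+\ell)+\cdots\big)$; your final equality silently drops a factor of $W$. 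This extra $W$ is an artifact of flattening the network's layered structure into one unstructured family of surfaces and then applying a single Warren-type bound. Removing it requires the layer-by-layer growth-function argument of~\cite{bartlett2019nearly}: for $t$ instances one bounds the number of sign patterns recursively by $\prod_{j=1}^{L}2(2e\,t\,U_j\,j/W)^{W}$ times the downstream contribution, and solving $2^{t}\le(\cdots)$ gives $t=O\big(LW\log(U+\ell)+W\log\Gamma_I\big)$ with no extra $W$. That refined argument is precisely what the main theorem of~\cite{cheng2024data} packages, which is why the paper just invokes it. So either cite that theorem as a black box, as the paper does, or carry out the layered counting explicitly rather than funneling everything through a single call to Lemma~\ref{lem:pseudo-dimension}.
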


\section{Numerical experiments}\label{sec:computational}

\paragraph{Setup.}

We conducted numerical experiments to evaluate the performance of both one-dimensional and $k$-dimensional cut generating functions, as discussed in \cref{sec:LearnabilityOf1DimCGF} and \cref{sec:LearnabilityOfKDimCGF}, across various distributions. The performance of these functions was compared to the GMI cut. The parameters for the cut generating functions were selected to minimize the average branch-and-cut tree size on the training set, and these parameters were then applied to the test set to evaluate performance. All results presented in \cref{tab:computation_1dCGF} are based on the test set, except for the last column. The experiments were run on a Linux machine equipped with a 12-core Intel i7-12700F CPU and 32GB of RAM. We solved the integer programming problems using Gurobi 11.0.1 \cite{gurobi}, with default cuts, heuristics, and presolve settings turned off. The code and data used in all experiments are available at \url{https://github.com/Hongyu-Cheng/LearnCGF}.

\paragraph{Problem descriptions.}
We considered two types of problems: Knapsack and Packing.
\begin{enumerate}
    \item $\mathsf{Knapsack}(N,K)$: Multiple knapsack problem with $N$ items and $K$ knapsacks. Instances were generated using the distribution proposed by Chvátal in \cite{chvatal1980hard}, also used in \cite{balcan2021improved}. The table has n/a entries for the $k$-row cut strategies for the $1$-knapsack problem, as there are not enough rows to generate multi-row cuts. 
    \item $\mathsf{Packing}(m,n)$: Packing problem with $m$ constraints and $n$ variables. Instances were generated using the distribution from \cite{tang2020reinforcement}.
\end{enumerate}

\paragraph{Cutting plane strategies.}
The following cutting plane strategies were evaluated and compared: 
\begin{enumerate}
    \item $\mathsf{GMI}$: Classical Gomory's mixed integer cut as defined in \cref{sec:IP-background}.
    \item $1$-row cut: Generated using the one-dimensional cut generating function defined in \cref{sec:LearnabilityOf1DimCGF}. We fixed $p=q=2$ and performed a grid search with a step size of 0.1 to select the best parameter $\bm{\mu} \in \{0,0.1,\dots,0.9,1\}^2$.
    \item $k$-row cut: Generated using $\pi_{\f,\bm{\mu}}$ defined in \cref{sec:LearnabilityOfKDimCGF} with $k \in \{2,5,10\}$. We uniformly sampled 121 different $\bm{\mu}$ on the simplex $\Delta_k^\infty$ (see \cite{gordon2020continuous,smith2004sampling}), and selected the best parameter based on the training set.
    \item Best $1$-row cut: The average tree size using the best parameter for each instance on the {\em training set}. This is not a practical strategy but indicates the strength of the cut generating functions and the potential for instance-dependent cut generation using neural networks.
\end{enumerate}
Since we aim to demonstrate that cut generating functions can produce stronger cuts than classical cutting planes, we did not specifically consider which row to select to generate the cut. This problem, while important in integer programming literature, is outside the scope of this paper. All cuts are generated from the first row of the simplex tableau with a non-integer right-hand side, and the $k$-row cut is generated from the first $k$ such rows. Also, to select the best parameters based on samples (i.e., solve the ERM problem), we used a simple grid search and optimized through enumeration, since the branch-and-cut tree size is a highly sophisticated function of the added cutting planes at the root node.

\paragraph{Numerical results.}
As shown in \cref{tab:computation_1dCGF}, the two cut generating function families considered in this paper significantly reduce the size of the branch-and-cut tree compared to the GMI cut, especially for the 1 knapsack problem. Although the improvement over the GMI cut on the test set is less dramatic for the packing problem, the last column in the table shows that there are still cutting planes that perform exceptionally well for each instance. Moreover, all the multi-row cuts on the $\mathsf{Knapsack}(30,3)$ problems outperform the best 1-row cut, indicating that multi-row cuts can sometimes achieve performance levels that single-row cuts cannot reach.

\begin{table}[htbp]
    \centering
    \caption{Average tree sizes on $100$ instances, after adding a single type of cut at the root.}
    \label{tab:computation_1dCGF}
    \begin{tabular}{ccccccc}
    \toprule
    \textbf{Problem Type} & $\GMI$ & 1-row cut & 2-row cut & 5-row cut & 10-row cut & best 1-row cut \\
    \midrule
    $\mathsf{Knapsack}(20,1)$ & 158.88 & \textbf{87.0} & n/a & n/a & n/a & 35.54 \\
    $\mathsf{Knapsack}(30,1)$ & 832.16 & \textbf{58.84} & n/a & n/a & n/a & 13.98 \\
    $\mathsf{Knapsack}(50,1)$ & 3543.91 & \textbf{277.01} & n/a & n/a & n/a & 125.85 \\
    \midrule
    $\mathsf{Knapsack}(16,2)$ & 399.86 & 316.8 & \textbf{178.68} & 234.09 & 203.66 & 102.07 \\
    $\mathsf{Knapsack}(30,3)$ & 4963.91 & 4311.04 & 3430.37 & \textbf{2817.55} & 2822.37 & 3561.36 \\
    \midrule
    $\mathsf{Packing}(15,30)$ & 389.67 & \textbf{367.48} & 376.86 & 401.87 & 391.44 & 303.72 \\
    $\mathsf{Packing}(20,40)$ & 1200.55 & 1123.9 & 1214.92 & \textbf{1113.82} & 1185.26 & 738.58 \\
    \bottomrule
    \end{tabular}
\end{table}

\begin{ack}
    Both authors gratefully acknowledge support from Air Force Office of Scientific Research (AFOSR) grant FA95502010341 and National Science Foundation (NSF) grant CCF2006587.
\end{ack}

\bibliographystyle{plainnat}
\bibliography{references.bib}

\appendix
\newpage
\section{Auxiliary Lemmas}\label{section:auxiliary-lemmas}

\begin{lemma}[Theorem 5.5 in~\cite{matousek1999geometric}, Lemma 17 in~\cite{bartlett2019nearly}, Lemma 3.3 in~\cite{anthony1999neural}, Proposition 2.4 in~\cite{stanley2004introduction}]\label{lem:poly-decomp}
    Let $\phi_1, \ldots, \phi_t : \R^d \to \R$ be $t$ multivariate polynomials of degree at most $a$ with $t \geq d$ and $a \geq 1$. Then 
    \begin{equation*}
        |\{\sgn(\phi_1(\bm{\mu})), \ldots, \sgn(\phi_t(\bm{\mu})): \bm{\mu} \in \R^d\}| \leq \begin{cases} \left(\frac{et}{d}\right)^{d}, & \text{ if }a=1,\\2\left(\frac{2eta}{d}\right)^d, & \text{ if }a \geq 2, \end{cases}
        \end{equation*}
    where $e$ is the base of the natural logarithm. 
\end{lemma}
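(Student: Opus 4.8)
The plan is to bound the number of realizable sign vectors $S:=\bigl|\{(\sgn\phi_1(\bm{\mu}),\dots,\sgn\phi_t(\bm{\mu})):\bm{\mu}\in\R^d\}\bigr|$ by treating the affine case $a=1$ separately from the genuinely nonlinear case $a\ge 2$, since the two produce different constants, and in both cases routing through the single arithmetic estimate $\sum_{i=0}^d\binom{t}{i}\le (et/d)^d$. I would dispatch this estimate first, as a standalone fact: multiplying each term by $(t/d)^{d-i}\ge 1$ (valid since $t\ge d$ and $d-i\ge 0$) gives $\sum_{i=0}^d\binom{t}{i}\le (t/d)^d\sum_{i=0}^d\binom{t}{i}(d/t)^i\le (t/d)^d\sum_{i=0}^t\binom{t}{i}(d/t)^i=(t/d)^d(1+d/t)^t\le (et/d)^d$. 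This factor is exactly what yields the $(et/d)^d$ of the $a=1$ bound and the $(2eta/d)^d$ core of the $a\ge 2$ bound.

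For $a=1$ the zero sets $\{\phi_j=0\}$ are hyperplanes, and the sign vector is constant on each face of their arrangement, so $S$ is the number of distinct faces. I would bound it by induction on $t$ through the recursion $N(t,d)\le N(t-1,d)+N(t-1,d-1)$: deleting $\phi_t$ leaves at most $N(t-1,d)$ sign vectors, and re-inserting the hyperplane $\{\phi_t=0\}$ splits a sign-stratum of the first $t-1$ functions into two vectors precisely when $\phi_t<0$ somewhere on it and $\phi_t\ge 0$ elsewhere on it, i.e.\ when the stratum meets $\{\phi_t=0\}$; the number of such strata is controlled by the trace of the first $t-1$ hyperplanes on $\{\phi_t=0\}\cong\R^{d-1}$, which is at most $N(t-1,d-1)$. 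With base cases $N(0,d)=N(t,0)=1$ this solves to $\sum_{i=0}^d\binom{t}{i}$, the classical hyperplane-arrangement count (cf.\ \cite{stanley2004introduction,matousek1999geometric}), and the estimate above finishes the case. The one point to verify carefully is that the half-open convention $\sgn(0)=1$ adds nothing extra: the locus $\{\phi_t=0\}$ contributes to the $\phi_t\ge 0$ side, so the on-hyperplane vectors are already one of the two pieces produced by the split, which is exactly what the recursion charges.

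The case $a\ge 2$ is the crux and where I expect the real work. Elementary arrangement counting fails because the zero sets are now degree-$\le a$ hypersurfaces, so I would follow Warren's strategy: first bound the number of \emph{strict} sign vectors (those with $\prod_j\phi_j(\bm{\mu})\neq 0$) by the number of connected components of $\R^d\setminus\bigcup_j\{\phi_j=0\}$, since each strict sign vector is realized on a union of such components. Bounding these components is the essential non-elementary ingredient, supplied by the Milnor--Thom theorem on the topology of real algebraic sets; the decisive feature is that it keeps the exponent equal to $d$ --- not the much larger monomial dimension $\binom{d+a}{a}$ that a naive Veronese linearization into a hyperplane arrangement would force --- at the cost of a factor $(2a)^d$, while summing the component bounds over subsystems reproduces the $\sum_{i=0}^d\binom{\cdot}{i}$ structure (this is the content imported from \cite{bartlett2019nearly,anthony1999neural,matousek1999geometric}). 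I would then pass from strict vectors to the half-open $\{0,1\}$ vectors by a perturbation argument: replacing each $\phi_j$ by $\phi_j\pm\gamma$ for infinitesimal $\gamma>0$ resolves every ``$=0$'' coordinate into a strict sign of a closely related family, at most doubling the count and producing the leading factor $2$. Applying the binomial estimate then assembles everything into $2\,(2a)^d(et/d)^d=2(2eta/d)^d$. The main obstacle is precisely the connected-components bound: retaining exponent $d$ rather than the lifted dimension is what the Milnor--Thom/Warren machinery buys, and it is the only step not reducible to bookkeeping.
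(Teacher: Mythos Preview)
The paper does not supply its own proof of this lemma; it is quoted with citations to external references and used as a black box. Your plan is precisely the standard route taken in those references: the hyperplane-arrangement recursion for $a=1$, Warren's theorem via the Milnor--Thom component bound for $a\ge 2$, and the elementary estimate $\sum_{i\le d}\binom{t}{i}\le (et/d)^d$ to package both cases.

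One small correction to your $a\ge 2$ step: the leading factor $2$ in $2(2eta/d)^d$ does not come from a doubling introduced by the perturbation; it is already present in Warren's bound on the number of strict sign patterns. The perturbation needed for the half-open convention $\sgn(0)=1$ is simpler than you sketch: pick one witness $\bm{\mu}_\sigma$ for each realized half-open vector $\sigma$, and choose a single $\gamma>0$ smaller than $\min\{|\phi_j(\bm{\mu}_\sigma)|:\phi_j(\bm{\mu}_\sigma)<0\}$; then the strict sign vector of $(\phi_1+\gamma,\dots,\phi_t+\gamma)$ at $\bm{\mu}_\sigma$ coincides with $\sigma$, so distinct half-open vectors yield distinct strict vectors of a family with the \emph{same} $t$ and $a$. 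Hence the half-open count is at most Warren's strict-sign bound with no extra factor. Your ``$\phi_j\pm\gamma$'' reading would instead feed $2t$ polynomials into Warren and overshoot the stated constant. This is a bookkeeping issue; the architecture of your argument is correct.
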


\begin{proof}[Proof of \cref{lem:pseudo-dimension}]
    For any $t \in \N_+ \cap [d,+\infty)$ and $(I_1,s_1), \dots, (I_t,s_t) \in \I \times \R$, there are at most $\Gamma t$ rational functions, denote as $\frac{p_1}{q_1},\dots,\frac{p_{\Gamma t}}{q_{\Gamma t}}$, where $p_i$ and $q_i>0$ are multivariate polynomials of degree at most $a$ and $b$ respectively, on $\bm{\mu} \in \P \subseteq \R^d$. For any $\bm{\mu}$ within each of these decomposed regions, the vector $[h(I_1,\bm{\mu}),\dots,h(I_{t},\bm{\mu})]^\T$ is invariant. The number of the decomposed regions is given by
    \begin{align*}
        &\left| \left\{ \left(\sgn\left( \frac{p_1(\bm{\mu})}{q_1(\bm{\mu})} \right), \dots, \sgn\left( \frac{p_{\Gamma t}(\bm{\mu})}{q_{\Gamma t}(\bm{\mu})} \right)\right) : \bm{\mu} \in \P \right\} \right| \\
        \leq & \left| \left\{ \left(\sgn\left( p_1(\bm{\mu})) \right), \dots, \sgn\left( p_{\Gamma t}(\bm{\mu}) \right)\right) : \bm{\mu} \in \R^d \right\} \right| \\
        \leq & 2 \left(\frac{2e\Gamma t a}{d}\right)^d,
    \end{align*}
    where the last inequality holds by \cref{lem:poly-decomp}. We denote these regions as $Q_1,\dots,Q_{\widetilde{K}}$, where $\widetilde{K} \leq 2 \left(\frac{2e \Gamma t a}{d}\right)^d$. Then we have,
    \begin{align*}
        &\left|      \left\{ \left(    \sgn(h(I_1,\bm{\mu}) - s_1),\dots,\sgn(h(I_{ t},\bm{\mu}) - s_{ t})    \right) :\bm{\mu} \in \P \right\}              \right|\\
        \leq & \sum_{i=1}^{\widetilde{K}} \left|      \left\{ \left(    \sgn(h(I_1,\bm{\mu}) - s_1),\dots,\sgn(h(I_{ t},\bm{\mu}) - s_{ t})    \right) :\bm{\mu} \in Q_i \right\}              \right|\\
        = & \sum_{i=1}^{\widetilde{K}} 1\\
        \leq & 2 \left(\frac{2e \Gamma t a}{d}\right)^d,
    \end{align*}
    where the equality holds since each $h(I_j,\bm{\mu})-s_j$ is an invariant constant for $\bm{\mu}$ varing in any fixed $Q_i$. Then it follows that $\Pdim(\H) = O(d \log (\Gamma a))$, using standard arguments from learning theory (refer to \cite{anthony1999neural}). 
\end{proof}

\begin{lemma}[Theorem 4.5 in \citet{balcan2022structural}]\label{lem:Balcan2022}
    For any fixed $I = (A, \b, \c) \in \I$, there are at most $\O\left((14)^n(m+2 n)^{3 n^2} \varrho^{5 n^2}\right)$ degree $5$ polynomials decomposing the cutting plane space $\mathbb{R}^{n+1}$ into regions such that the size of the branch-and-cut tree after adding the cut ${\bm{\alpha}}^\T \x \leq \beta$ at the root remain the same over all $({\bm{\alpha}}, \beta)$ within a given decompsoed region. 
\end{lemma}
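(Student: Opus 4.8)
The plan is to fix the instance $I=(A,\b,\c)$ and track how the \emph{entire execution} of branch-and-cut depends on the added cut $(\bm{\alpha},\beta)\in\R^{n+1}$, showing that the execution trace—and therefore the final tree and its size—is locally constant, changing only when $(\bm{\alpha},\beta)$ crosses a surface cut out by a low-degree polynomial. Since the tree size is integer-valued, it suffices to produce a finite family of polynomials whose joint sign pattern determines every discrete decision the algorithm makes, and then to count those polynomials and bound their degree.

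First I would analyze a single node. A node is specified by the integral branching bounds accumulated on the path from the root; because every feasible variable lies in $\{0,\dots,\varrho\}$ and the branching constraints are integral, the number of distinct reachable branching-bound vectors, hence node types, is $\varrho^{\bigO{n}}$. At any such node the only parameter-bearing constraint in the LP relaxation is the single cut $\bm{\alpha}^\T\x\le\beta$; fixing a candidate optimal basis among the $m+2n$ original, bound, and cut constraints, Cramer's rule expresses the optimal vertex and objective value as ratios of polynomials in $(\bm{\alpha},\beta)$ of bounded degree, and the region where that basis is primal- and dual-optimal is carved out by the corresponding sign conditions. The integrality test, the branching-variable rule, and the pruning comparison $z_{\mathrm{node}}\le z_{\mathrm{incumbent}}$ each reduce, after clearing the (sign-fixed) denominators, to additional polynomial inequalities in $(\bm{\alpha},\beta)$.

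Next I would assemble the tree recursively: within a cell on which all basis choices, all branching variables, and all prune/keep decisions are fixed, the realized tree is combinatorially identical across the cell, hence of constant size. The surface family is then the union, over all potentially reachable node types ($\varrho^{\bigO{n}}$ of them) and all candidate bases (at most $(m+2n)^{\bigO{n}}$ per node), of the associated feasibility, branching, and pruning polynomials; bounding these ingredients together with their pairwise interactions across the tree and folding in a per-variable constant factor yields the stated count $\bigO{(14)^{n}(m+2n)^{3n^{2}}\varrho^{5n^{2}}}$ of degree-$5$ surfaces.

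The main obstacle is controlling the polynomial degree as the recursion deepens. Naively, comparing objective values and solution coordinates—each a ratio of polynomials—across successive nodes, and pruning against a parameter-dependent incumbent, would compound degrees without bound. The key structural observation that keeps every decision governed by a polynomial of degree at most $5$ is that \emph{exactly one} constraint carries the parameters, so at each node the LP optimum stays a ratio of low-degree polynomials and each comparison contributes only a bounded-degree cross-product. Making this bookkeeping precise—verifying that branching thresholds, integrality tests, and the incumbent comparison never push the degree past $5$—is where the technical work lies.
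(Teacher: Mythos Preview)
The paper does not prove this lemma; it is quoted verbatim as Theorem~4.5 of \cite{balcan2022structural} and used as a black box in the proofs of \cref{thm:LearnabilityOfOptimalMu1Mu2} and \cref{thm:LearnabilityOfKDimCGF}. There is therefore no in-paper argument to compare your proposal against.

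That said, your outline matches the strategy of the cited source: fix $I$, observe that the only parameter-dependent constraint in any node LP is the single cut row, use Cramer's rule so that basic solutions and objective values are ratios of determinants that are \emph{affine} in $(\bm{\alpha},\beta)$ (since the cut occupies at most one row of any basis), and then collect the polynomial sign conditions governing basis feasibility/optimality, integrality of each coordinate, branching-variable selection, and pruning against the incumbent. You correctly flag the central structural point that keeps the degree bounded: only one row carries the parameters.

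Where your sketch remains a sketch is in the bookkeeping that produces the specific constants. You assert the final count $(14)^n(m+2n)^{3n^2}\varrho^{5n^2}$ and degree $5$ without deriving them, and your heuristic (``pairwise interactions across the tree'' and a ``per-variable constant factor'') does not by itself explain why the exponent on $(m+2n)$ is $3n^2$ rather than, say, $n$, or why the degree is exactly $5$ rather than $2$ or $4$. In the original argument these arise from carefully tracking which comparisons are needed: e.g., comparing a coordinate of one node's LP optimum (a ratio of two affine forms) to a coordinate of another node's optimum, or to an integer threshold, after clearing all denominators and accounting for sign ambiguities. If you intend to reprove the lemma rather than cite it, that accounting is the substantive gap to fill; otherwise, citing \cite{balcan2022structural} as the present paper does is the appropriate move.
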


\begin{lemma}[\cite{tang2020reinforcement,conforti2014integer}]\label{lem:CutIsLinear}
    For every $A \in \Z^{m \times n}, \b \in \Z^m$, there exists an affine linear transformation that maps a cutting plane derived from a cut generating function for the standard form integer linear programming feasible region %$\max\left\{ \c^\T \x:A\x+\s = \b, \x,\s\geq \0, \x \in \Z^n, \s \in \Z^m\right\}$ 
    $\left\{(\x, \s) \in \R^n \times \R^m :A\x+\s = \b, \x,\s\geq \0, \x \in \Z^n, \s \in \Z^m\right\}$ into the corresponding cutting plane for the corresponding canonical form feasible region $\left\{\x \in \R^n:A\x \leq \b, \x\geq \0, \x \in \Z^n \right\}$. %$\max\left\{ \c^\T \x:A\x \leq \b, \x\geq \0, \x \in \Z^n \right\}$.
\end{lemma}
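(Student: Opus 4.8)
The plan is to exhibit the affine linear transformation explicitly and then verify the two properties it must satisfy: it carries a cut on the standard-form variables to a cut on the canonical-form variables, and it preserves validity. The entire argument rests on the elementary observation that the slack variables are an affine linear function of the original variables, $\s = \b - A\x$, together with the fact that this substitution preserves integrality precisely because $A$ and $\b$ have integer entries.

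First I would set up the bijection between the two feasible regions. Define $\Phi:\R^n \to \R^{n} \times \R^m$ by $\Phi(\x) = (\x,\ \b - A\x)$. I would check that $\x$ lies in the canonical-form region $\{\x \in \R^n : A\x \leq \b,\ \x \geq \0,\ \x \in \Z^n\}$ if and only if $\Phi(\x)$ lies in the standard-form region $\{(\x,\s) : A\x + \s = \b,\ \x,\s \geq \0,\ \x \in \Z^n,\ \s \in \Z^m\}$. The equality constraint holds by construction and the sign condition is immediate since $\s \geq \0 \iff A\x \leq \b$. The one nontrivial point is integrality: because $A \in \Z^{m\times n}$, $\b \in \Z^m$ and $\x \in \Z^n$, the slack $\s = \b - A\x$ is automatically integral, so the two integrality requirements are equivalent under $\Phi$. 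This is the only place the integrality of $(A,\b)$ is used, and it is what makes $\Phi$ a genuine bijection between the two integer feasible sets.

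Next I would perform the substitution on the cut. Any cutting plane for the standard-form region is a valid linear inequality $\u^\T \x + \v^\T \s \leq \eta$ (with $\u \in \R^n$, $\v \in \R^m$, $\eta \in \R$) holding at every integer feasible $(\x,\s)$; the cut generated via \eqref{eq:valid_cut} is of exactly this type, with zero coefficients on the basic variables and appropriate signs. Replacing $\s$ by $\b - A\x$ yields $(\u - A^\T \v)^\T \x \leq \eta - \v^\T \b$, so the induced map on cut data is
\begin{equation*}
    (\u,\v,\eta) \longmapsto (\bm{\alpha},\beta) = \left(\u - A^\T \v,\ \eta - \v^\T \b\right).
\end{equation*}
This map on coefficients is linear, and it arises from the affine substitution $\s \mapsto \b - A\x$ in variable space (the constant term $\b$ is exactly what makes the underlying substitution affine rather than linear). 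I would then confirm validity: for any canonical-form integer feasible $\x$, the point $\Phi(\x)$ is standard-form integer feasible by the first step, hence satisfies $\u^\T \x + \v^\T \s \leq \eta$, and evaluating at $\s = \b - A\x$ gives precisely $\bm{\alpha}^\T \x \leq \beta$. Thus $\bm{\alpha}^\T \x \leq \beta$ is valid for the canonical-form region, as the lemma asserts. (A canonical-form cut lifts back to standard form by the affine map $(\bm{\alpha},\beta) \mapsto (\bm{\alpha},\0,\beta)$, so the correspondence goes both ways.)

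There is essentially no hard step here; the lemma is a bookkeeping statement and the substitution out of the slacks is routine linear algebra. The only point deserving explicit care---and the one I would emphasize---is the integrality preservation in the bijection of the second step, since it is what identifies the two integer feasible sets under $\Phi$ and therefore makes validity transfer cleanly. Everything else follows by direct computation.
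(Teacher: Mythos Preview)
Your proposal is correct and follows essentially the same approach as the paper: substitute $\s = \b - A\x$ into the standard-form cut and read off the resulting canonical-form coefficients. The paper's proof is terser---it simply performs the substitution on a cut of the form $\a_\x^\T \x + \a_\s^\T \s \geq 1$ and records the resulting affine map as a matrix---whereas you additionally spell out the bijection $\Phi$ between the two integer feasible regions and argue explicitly why validity transfers, but the underlying computation is identical.
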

\begin{proof}
    %We write the cutting plane derived from cut generating functions for the standard form linear programming problem as 
    Let $\a_{\x}^\T \x + \a_{\s}^\T \s \geq 1$ be a cutting plane for the standard form integer linear programming problem. For any feasible $\x$ and $\s$, we have $\s = \b - A\x$. Substituting into the inequality, we obtain the equivalent cutting plane %we have $\a_{\s}^\T A\x + \a_{\s}^\T \s = \a_{\s}^\T \b$. Therefore, the cutting plane is equivalent to 
    $\left( \a_\s^\T A - \a_\x^\T \right)\x \leq \a_\s^\T \b - 1$ in the canonical form problem. Then it's clear that 
    \begin{equation*}
        \begin{bmatrix}
            \bm{\alpha}(\a)\\\beta(\a)
        \end{bmatrix}
        = \begin{bmatrix}
            A^\T \a_\s - \a_\x \\ \b^\T \a_\s - 1
        \end{bmatrix}
        = \begin{bmatrix}
            -I & A^\T\\
            \0 & \b^\T
        \end{bmatrix} 
        \begin{bmatrix}
            \a_\x \\ \a_\s
        \end{bmatrix}-\begin{bmatrix}
            \0 \\ 1
        \end{bmatrix}
    \end{equation*}
    is affine linear on $\a = \begin{bmatrix}
        \a_\x \\ \a_\s
    \end{bmatrix}$ since $A,\b$ are considered to be fixed. 
\end{proof}

% \begin{lemma}
%     For any fixed $\I = (A, \b, \c) \in \mathcal{I}$, consider a parameterized cutting plane family $(\bm{\alpha}(\bm{\mu}), \beta(\bm{\mu})) \in \R^{n+1}$ for $\bm{\mu} \in \P \subseteq \R^d$. Assume there exists a decomposition of the space $\P$ into at most $K$ regions, such that each component of $(\bm{\alpha}(\bm{\mu}), \beta(\bm{\mu}))$ over each region is a fixed rational function, represented by the quotient of two polynomials with degrees at most $a$ and $b$ respectively, and the polynomial in the denominator is fixed. Then, the space $\P$ can be partitioned by at most $\bigO{K (14)^n (m+2n)^{3n^2} \varrho^{5n^2}}$ polynomials of degree $5ab$. Within each region of this decomposition, the size of the branch-and-cut tree, after adding the corresponding cutting plane at the root, remains constant.
% \end{lemma}

\section{Proofs for results in \cref{sec:LearnabilityOf1DimCGF}} \label{sec:ProofsForLearnabilityOf1DimCGF}

\begin{figure}[htbp]
    \centering
    \begin{subfigure}[b]{0.32\textwidth}
        \includegraphics[width=\textwidth]{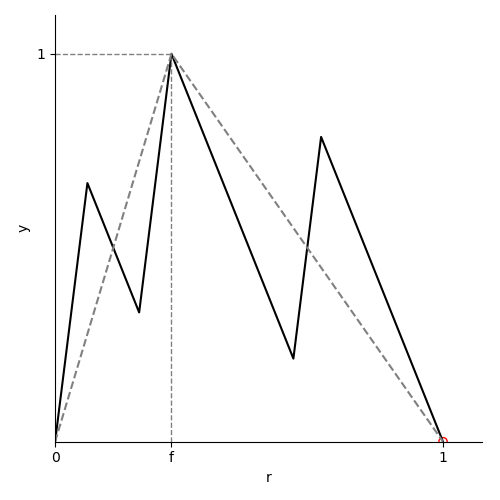}
        \caption{$\pi_{0.3,-2,7}^{2,2}(r)$}
    \end{subfigure}
    \hfill
    \begin{subfigure}[b]{0.32\textwidth}
        \includegraphics[width=\textwidth]{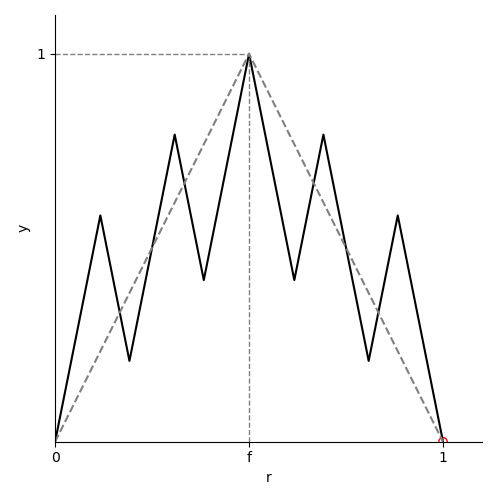}
        \caption{$\pi_{0.5,-5,5}^{3,3}(r)$}
    \end{subfigure}
    \hfill
        \begin{subfigure}[b]{0.32\textwidth}
        \includegraphics[width=\textwidth]{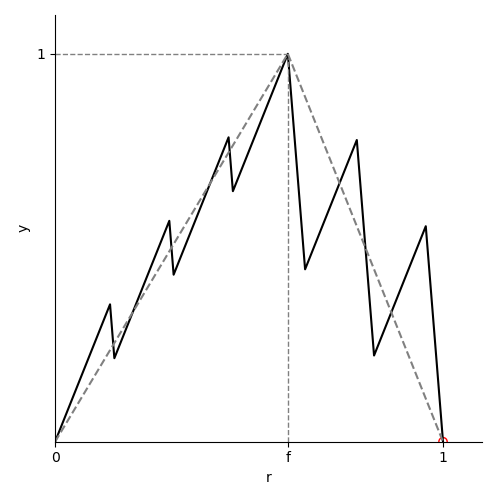}
        \caption{$\pi_{0.6,-12.5,2.5}^{4,3}(r)$}
    \end{subfigure}
    \caption{Three examples of the one-dimensional cut generating functions $\pi_{f,s_1,s_2}^{p,q}$ on $[0,1)$.}
    \label{fig:GomoryJohnsonExtreme}
\end{figure}

Theorem 6 in \cite{gomory2003t} yields the following lemma.
\begin{lemma}\label{lem:ValidMuRangeForK1K2}
    Consider fixed $f \in (0,1)$ and $p, q \in [2, +\infty) \cap \mathbb{N}$. $\pi_{f,s_1,s_2}^{p,q}(\cdot)$ is a valid cut generating function, i.e., it satisfies the three conditions outlined in \cref{sec:IP-background}, if $s_1$ and $s_2$ are chosen from $\D_f^{p,q}$, which is established as follows:
    {\small
    \begin{equation*}
        \D_f^{p,q} = \begin{cases}
            \left[ \frac{p+q-1}{(p+q-1)f-p}, \frac{1}{f-1} \right] \times \left[ \frac{1}{f}, \frac{p+q-1}{q-(p+q-1)(1-f)} \right] &\text{ if } (p+q-1)f-p < 0 \text{ and } (p+q-1)(1-f)-q < 0,\\
            \left[ \frac{p+q-1}{(p+q-1)f-p}, \frac{1}{f-1} \right] \times \left[ \frac{1}{f}, +\infty \right) &\text{ if } (p+q-1)f-p < 0 \text{ and } (p+q-1)(1-f)-q \geq 0,\\
            \left(-\infty, \frac{1}{f-1} \right] \times \left[ \frac{1}{f}, \frac{p+q-1}{q-(p+q-1)(1-f)} \right] &\text{ if } (p+q-1)f-p \geq 0 \text{ and } (p+q-1)(1-f)-q < 0,\\
            \left(-\infty, \frac{1}{f-1} \right] \times \left[\frac{1}{f}, +\infty \right) &\text{ if } (p+q-1)f-p \geq 0 \text{ and } (p+q-1)(1-f)-q \geq 0.
        \end{cases} 
    \end{equation*}}

    % Old version of s_1, s_2
    % \begin{equation*}
    %     \begin{cases} 
    %     s_1 \in \left[ \frac{p+q-1}{(p+q-1)f-p}, \frac{1}{f-1} \right], & \text{if } (p+q-1)f-p < 0 \\
    %     s_1 \in \left(-\infty, \frac{1}{f-1}\right], & \text{if } (p+q-1)f-p \geq 0
    %     \end{cases},
    % \end{equation*}
    % \begin{equation*}
    %     \begin{cases} 
    %     s_2 \in \left[ \frac{1}{f}, \frac{p+q-1}{q-(p+q-1)(1-f)} \right], & \text{if } (p+q-1)(1-f)-q < 0 \\
    %     s_2 \in \left[\frac{1}{f}, +\infty\right), & \text{if } (p+q-1)(1-f)-q \geq 0
    %     \end{cases}.
    % \end{equation*}
\end{lemma}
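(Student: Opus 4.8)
The plan is to derive the closed form directly from Theorem~6 of~\cite{gomory2003t}, which is the source of the construction, and then do a short sign analysis. The first step is to recognize $\pi_{f,s_1,s_2}^{p,q}$ as exactly the Gomory--Johnson ``fill-in'' associated with a negative slope $s_1$ and a positive slope $s_2$: the affine functions $\phi_i^1$ (slope $s_1$) and $\phi_i^2$ (slope $s_2$) are the $2p$ lines whose pairwise minima, maxed over $i=1,\dots,p$, trace the ``teeth'' of the graph on $[0,f]$, and the $\psi_j^1,\psi_j^2$ ($j=1,\dots,q-1$) produce the remaining teeth on $[f,1]$; the constant offsets are chosen so that successive teeth interlock continuously. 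With this identification, periodicity is immediate (the formula is extended $\Z$-periodically), $\pi(\0)=0$ follows by evaluating each branch at $r=0$ under the slope bounds $s_1\le\tfrac{1}{f-1}$, $s_2\ge\tfrac1f$ (the $s_2$-branches are then $\le 0$ while the $s_1$-branches are $\ge 0$, so the minimal value $0$ is attained and not exceeded), and $\pi(f)=1$ is the direct identity $\phi_p^1(f)=s_1 f+(1-fs_1)=1$ together with the matching values of the other branches at $f$.

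The second step is to read off Gomory--Johnson's admissibility inequalities in the present notation. They consist of: (i) the slope bounds $s_1\le\tfrac1{f-1}$ and $s_2\ge\tfrac1f$, which are forced in order for the graph to have the intended shape with a genuine peak of value $1$ at $f$; and (ii) a subadditivity constraint coupling the slopes with the tooth counts, which after clearing denominators takes the form $\bigl((p+q-1)f-p\bigr)\,s_1\le p+q-1$ and $\bigl(q-(p+q-1)(1-f)\bigr)\,s_2\le p+q-1$. The four cases in the statement are then just the sign analysis of the coefficients $(p+q-1)f-p$ and $(p+q-1)(1-f)-q$. When $(p+q-1)f-p<0$, dividing the first inequality flips it and yields the finite lower bound $s_1\ge\tfrac{p+q-1}{(p+q-1)f-p}$; when $(p+q-1)f-p\ge 0$, the left-hand side is nonpositive at $s_1\le\tfrac1{f-1}<0$, so the constraint is vacuous and $s_1$ ranges over $(-\infty,\tfrac1{f-1}]$. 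The identical dichotomy for $s_2$ (using $s_2\ge\tfrac1f>0$) produces the upper bound $\tfrac{p+q-1}{q-(p+q-1)(1-f)}$ or $+\infty$. Since the two slope constraints decouple, $\D_f^{p,q}$ is the product of the resulting intervals, i.e.\ a (possibly semi-infinite) rectangle; non-emptiness of each interval reduces, after cross-multiplication, to the trivial inequalities $p,q\ge 1$.

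I expect the main obstacle to be the bookkeeping in the first step: pinning down the precise correspondence between our max--min formula and Gomory--Johnson's description of the fill-in (confirming that the $p$, $p-1$, $q-1$, $q$ appearing in the denominators are theirs, and that the inner minima select the intended tent-shaped pieces rather than collapsing), and then extracting their subadditivity condition in the coefficient form of (ii) so that the sign case-split is transparent. Once that translation is fixed, the remainder --- the four-way case split and the algebraic simplification to the displayed expressions --- is routine. Note also that $\D_f^{p,q}$ was \emph{defined} as the exact set of slope pairs giving a valid function, so both inclusions are required; this is not an issue since Theorem~6 of~\cite{gomory2003t} is an exact characterization, so necessity comes for free from the same translation. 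As a fallback not relying on that reference, one can verify the three conditions from scratch: periodicity and normalization as above, and subadditivity via the standard reduction to checking $\pi(r)+\pi(r')\ge\pi(r+r')$ at the finitely many breakpoints of this piecewise-linear function, which again yields exactly the coefficient inequalities in (ii).
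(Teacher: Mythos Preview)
Your proposal is correct and takes essentially the same approach as the paper: the paper's entire ``proof'' is the single sentence ``Theorem 6 in \cite{gomory2003t} yields the following lemma,'' and your plan is precisely to invoke that theorem and carry out the translation and sign case-split explicitly. Your added detail (the identification of $\pi_{f,s_1,s_2}^{p,q}$ with the Gomory--Johnson fill-in, the coefficient inequalities, and the four-way sign analysis) is exactly the bookkeeping the paper omits, and your fallback direct verification is not needed.
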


\begin{lemma}\label{lem:IntersectionPointsOfPiAndGMI}
    For any fixed $f\in(0,1),p,q \in[2,+\infty] \cap \N$, and any $(s_1,s_2) \in \D_f^{p,q}$, the intersection points of $\pi_{f,s_1,s_2}^{p,q}(\cdot)$ and $\GMI_f(\cdot)$ are fixed. More specifically, for any $(s_1,s_2) \in \D_f^{p,q}$ with $s_1 \neq \frac{1}{f-1}$ and $s_2 \neq \frac{1}{f}$, the set of intersection points is explicitly given by
    \begin{align*}
        &\left\{ \left( \frac{if}{p},\frac{i}{p} \right):i=0,\dots,p \right\} \cup \left\{ \left( 1-\frac{j(1-f)}{q},\frac{j}{q} \right):j=0,\dots,q-1 \right\}\\
        &\cup \left\{ \left( \frac{if}{p-1},\frac{i}{p-1} \right):i=1,\dots,p-2 \right\} \cup \left\{ \left( 1-\frac{j(1-f)}{q-1},\frac{j}{q-1}\right):j=1,\dots,q-2  \right\}, 
    \end{align*}
    and these intersection points decompose the interval $[0,1]$ into $2(p+q-2)$ subintervals given by the following break points in ascending order:
    \begin{equation*}
       0, \frac{f}{p}, \frac{f}{p-1}, \dots, \frac{(p-2)f}{p},\frac{(p-2)f}{p-1},\frac{(p-1)f}{p},f, 1- \frac{(q-1)(1-f)}{q},\dots,1. 
    \end{equation*}
\end{lemma}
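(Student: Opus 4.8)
The plan is to understand both functions as maxima/minima of explicit affine pieces and to show that, despite the dependence on $(s_1,s_2)$, the graphs of $\pi_{f,s_1,s_2}^{p,q}$ and $\GMI_f$ cross only at a discrete set of abscissae that does not move with $(s_1,s_2)$. The first step is to record the piecewise-affine form of $\GMI_f$ on $[0,1)$: it equals $r/f$ on $[0,f]$ (slope $1/f$, value $0$ at $0$ and $1$ at $f$) and $(1-r)/(1-f)$ on $[f,1)$ (slope $-1/(1-f)$, value $1$ at $f$ and $0$ at $1$). The second step is to examine the affine functions $\phi_i^1,\phi_i^2,\psi_j^1,\psi_j^2$ defining $\pi_{f,s_1,s_2}^{p,q}$. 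The key observation I would exploit: each $\phi_i^1$ has slope $s_1$ and satisfies $\phi_i^1(f) = s_1 f + i(1-fs_1)/p = i/p$, so $\phi_i^1$ passes through the point $(f, i/p)$ regardless of $s_1$; similarly $\phi_i^2(f) = (i-1)/(p-1)$, $\psi_j^1$ passes through $(1, (j-1)/(q-1))$, and $\psi_j^2$ passes through $(1, j/q)$. Thus the ``$s_1$-family'' of lines all pivot about fixed points on the vertical lines $r=f$ and $r=1$, and likewise for the ``$s_2$-family.''

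With this pivoting structure in hand, the third step is to locate the intersections with $\GMI_f$ directly. On $[0,f]$, $\GMI_f(r) = r/f$ is the line through $(0,0)$ and $(f,1)$; a line of slope $s_1$ through $(f, i/p)$ meets it at the $r$ where $r/f = s_1 r + i/p - s_1 f$, i.e. $r(1/f - s_1) = i/p - s_1 f$; since $i/p = s_1 f + i(1-fs_1)/p$ one computes $r = if/p$, independent of $s_1$ — and by the same token $\phi_i^2$ meets $\GMI_f$ at $r = if/(p-1)$. Symmetrically, on $[f,1]$ the $\psi$-lines meet $(1-r)/(1-f)$ at $r = 1 - j(1-f)/q$ and $r = 1 - j(1-f)/(q-1)$. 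The fourth step is to argue these are \emph{all} the crossings and that at each such break point $\pi_{f,s_1,s_2}^{p,q}$ genuinely equals $\GMI_f$: by conditions (1)–(3), $\pi_{f,s_1,s_2}^{p,q}$ is itself a valid minimal/extreme Gomory–Johnson function dominated pointwise in an appropriate sense, and the listed points are exactly the union of the breakpoints of the min/max envelope; between consecutive listed points exactly one affine piece of $\pi$ is active and it lies strictly on one side of the corresponding affine piece of $\GMI_f$ (this uses $(s_1,s_2) \in \D_f^{p,q}$, $s_1 \neq \tfrac{1}{f-1}$, $s_2 \neq \tfrac1f$, which forces the relevant slope inequalities to be strict). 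Finally, sorting the values $\{if/p\},\{if/(p-1)\}$ on $[0,f]$ and $\{1-j(1-f)/q\},\{1-j(1-f)/(q-1)\}$ on $[f,1]$ into ascending order, and counting $p + (p-2)$ interior points on $[0,f]$ together with $(q-1) + (q-2)$ on $[f,1]$, yields the claimed $2(p+q-2)$ subintervals and the stated list of break points.

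The main obstacle I anticipate is the fourth step: verifying rigorously that no \emph{additional}, $(s_1,s_2)$-dependent intersections occur, i.e. that on each candidate subinterval the active affine branch of $\pi_{f,s_1,s_2}^{p,q}$ is the one pivoting about the correct fixed point and that it does not re-cross $\GMI_f$ in the interior. This amounts to a careful bookkeeping of which of the $2p + 2(q-1)$ lines attains the inner $\min$ and outer $\max$ as $r$ ranges over $[0,1)$, for which I would lean on the structure established in \cite{gomory2003t} (Theorem 6, already cited for Lemma~\ref{lem:ValidMuRangeForK1K2}) together with the monotone dependence of each branch's value on $s_1$ or $s_2$. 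Once the active-branch pattern is pinned down, the intersection computation is the elementary algebra sketched above, and the ordering/counting of break points is routine.
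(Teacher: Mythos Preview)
Your approach is correct and essentially matches the paper's own proof: both compute the crossing of each affine piece $\phi_i^1,\phi_i^2,\psi_j^1,\psi_j^2$ with the relevant branch of $\GMI_f$, observe the resulting abscissa is independent of $(s_1,s_2)$, eliminate duplicates, and sort. The paper's proof is in fact terser than yours and does not address your ``fourth step'' concern at all (it simply asserts the intersection set after the elementary algebra); note also two small index slips in your sketch---$\phi_i^2$ meets $r/f$ at $r=(i-1)f/(p-1)$ and $\psi_j^1$ meets $(1-r)/(1-f)$ at $r=1-(j-1)(1-f)/(q-1)$---which wash out after reindexing and duplicate removal.
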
 
%{\color{red}Lemma~\ref{lem:IntersectionPointsOfPiAndGMI} is needed only for the proof of Proposition~\ref{prop:LinearDecompositionsOfPiWithRespectToMu1Mu2}. I think it is best to move the statement of Lemma~\ref{lem:IntersectionPointsOfPiAndGMI} to the Appendix as well. The main structural insight is Proposition~\ref{prop:LinearDecompositionsOfPiWithRespectToMu1Mu2}.}
\begin{proof}[Proof of \cref{lem:IntersectionPointsOfPiAndGMI}]
    Let $s_1 < \frac{1}{f-1}$ and $s_2 > \frac{1}{f}$ be any valid slopes. A direct calculation shows that the intersection points of $\pi_{f,s_1,s_2}^{p,q}(r)$ and $\GMI_f(r)$ within $r\in[0,1]$ are given by:
    \begin{align*}
        \phi_i^1(r) &= \GMI_f(r) \iff s_1r + i \frac{1-fs_1}{p} = \frac{r}{f} \iff r = i \frac{f}{p},\\
        \phi_i^2(r) &= \GMI_f(r) \iff s_2r + (i-1)\frac{1-fs_2}{p-1} = \frac{r}{f} \iff r = (i-1) \frac{f}{p-1},\\
        \psi_j^1(r) &= \GMI_f(r) \iff s_1(r-1) + (j-1) \frac{1+(1-f)s_1}{q-1} = \frac{1-r}{1-f} \iff r = 1-(j-1) \frac{1-f}{q-1},\\
        \psi_j^2(r) &= \GMI_f(r) \iff s_2(r-1) + j \frac{1+(1-f)s_2}{q} = \frac{1-r}{1-f} \iff r = 1-j \frac{1-f}{q},
    \end{align*}
    where $i \in \{1,\dots,p\}$ and $j \in \{1,\dots,q-1\}$. Eliminating duplicate points and observing a maximum of $2p+2q-3$ intersection points in nondegenerate cases, the lemma statement regarding the intersection points is confirmed. The interval decomposition is easy to be verified by sorting these points along their first coordinates.
\end{proof}

\begin{proof}[Proof of \cref{prop:LinearDecompositionsOfPiWithRespectToMu1Mu2}]
    Given any fixed $r_1, \dots, r_{n} \in [0,1)$, by \cref{lem:IntersectionPointsOfPiAndGMI}, each $r_j$, $j=1, \dots, n$, must lie in one of the intervals independent of $s_1, s_2$ (if $r_j$ is on the boundary of any of these intervals, then $\pi_{f,s_1,s_2}^{p,q}(r_j)$ is a constant). It is not hard to see from the construction and \cref{lem:IntersectionPointsOfPiAndGMI} that within each interval's interior, there is exactly one breakpoint of $\pi_{f, s_1, s_2}^{p,q}(\cdot)$ in the form of the quotient of two affine linear functions of $s_1$ and $s_2$. For instance,
    \begin{equation*}
        \phi_i^1(r) = \phi_i^2(r) \iff s_1r + i \frac{1-fs_1}{p} = s_2r + (i-1)\frac{1-fs_2}{p-1} \iff r = \frac{(i-1)\frac{1-fs_2}{p-1}-i \frac{1-fs_1}{p}}{s_1-s_2}.
    \end{equation*}
    We denote these corresponding breakpoints in the form $\frac{f_1(s_1, s_2)}{g_1(s_1, s_2)}, \dots, \frac{f_{n}(s_1, s_2)}{g_{n}(s_1, s_2)}$, where $f_i$ and $g_i$ are affine linear functions of $s_1$ and $s_2$. We introduce the following $n$ hyperplanes of $s_1$ and $s_2$:
    \begin{equation*}
        \{(s_1, s_2) \in \D_f^{p,q} : f_i(s_1, s_2) = r_i g_i(s_1, s_2)\}, \quad i = 1, \dots, n.
    \end{equation*}
    Then, within each region decomposed by these hyperplanes, each $\pi_{f, s_1, s_2}^{p,q}(r_j)$ is a fixed affine linear function of $(s_1,s_2)$.
\end{proof}

\begin{figure}[htbp]
    \begin{small}
        \begin{center}
            \includegraphics[width=0.95\textwidth]{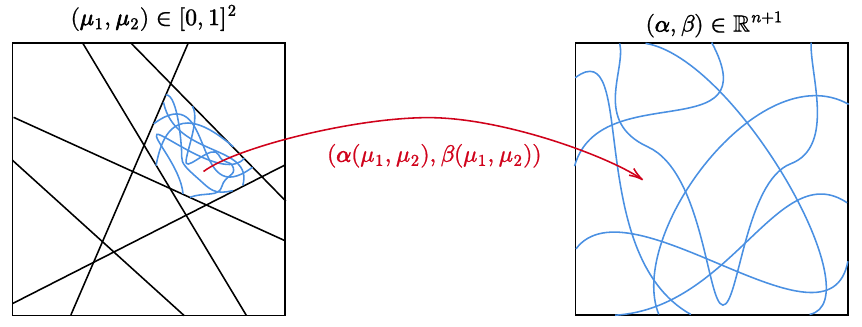}
        \end{center}
        \caption{Illustration of the proof of \cref{thm:LearnabilityOfOptimalMu1Mu2}.}
        \label{fig:theorem1}
    \end{small}
\end{figure}

\begin{proof}[Proof of \cref{thm:LearnabilityOfOptimalMu1Mu2}]
    \cref{lem:Balcan2022} states that there are at most $\Gamma = \O\left((14)^n (m+2n)^{3n^2} \varrho^{5n^2}\right)$ degree 5 polynomials decomposing the cutting plane $(\bm{\alpha}, \beta)$ space $\R^{n+1}$ such that the branch-and-cut tree size after adding the corresponding cutting plane at the root is the same within each decomposed region. We denote these degree 5 polynomials as $\xi_1, \dots, \xi_\Gamma$. In other words, for any $(\bm{\alpha}, \beta) \in \R^{n+1}$ such that the vector
    \begin{equation*}
        \left(\sgn\left(\xi_1\left( \begin{bmatrix}\bm{\alpha}\\\beta\end{bmatrix} \right)\right),\dots,\sgn\left(\xi_\Gamma\left( \begin{bmatrix}\bm{\alpha}\\\beta\end{bmatrix} \right)\right)\right)
    \end{equation*}
    remains constant, the branch-and-cut tree size after adding $\bm{\alpha}^\T \x \leq \beta$ at the root is invariant.

    Proposition~\ref{prop:LinearDecompositionsOfPiWithRespectToMu1Mu2} provides a decomposition of the $(s_1, s_2)$ space $\mathcal{D}_f^{p,q}$ by $n$ hyperplanes. Consequently, there exists a corresponding decomposition of the $\bm{\mu}$ space $[0,1]^2$ by $n$ hyperplanes as well, since $\bm{\mu}$ is a fixed affine linear function of $(s_1, s_2)$ as specified by the mapping \eqref{eq:MappingFromMu1Mu2ToS1S2}. We denote these decomposed regions by $Q_1, \dots, Q_K \subseteq [0,1]^2$, where $K = \mathcal{O}(n^2)$ due to \cref{lem:poly-decomp}. Fix a region $Q_i$ for any $i \in \{1, \dots, K\}$, the mapping \eqref{eq:MappingFromMu1Mu2ToS1S2}, \cref{prop:LinearDecompositionsOfPiWithRespectToMu1Mu2} and \cref{lem:CutIsLinear} give three different fixed affine transformations from $\bm{\mu}$ to a cutting plane for the canonical linear programming problem. We denote the corresponding cutting plane as
    \begin{equation*}
        \begin{bmatrix}
            \bm{\alpha}(\bm{\mu})\\\beta(\bm{\mu}) 
        \end{bmatrix} = \g(\bm{\mu}) \in \R^{n+1}, 
    \end{equation*}
    where $\g_i$, $i \in \{1, \dots, n+1\}$, are affine linear functions of $\bm{\mu}$. Therefore, there are $\Gamma$ degree-5 polynomials
    \begin{equation*}
        \left( \xi_1 \circ \g \right)(\bm{\mu}),\dots,\left( \xi_\Gamma \circ \g \right)(\bm{\mu}) 
   \end{equation*}
    such that when they have invariant sign patterns, the branch-and-cut tree size after adding the corresponding cutting plane at the root is the same. There are $\bigO{K \Gamma}$ such degree 5 polynomials in total, then the pseudo-dimension result follows from \cref{lem:pseudo-dimension}. 
\end{proof}

\section{Proofs for results in \cref{sec:LearnabilityOfKDimCGF}}\label{sec:ProofsForLearnabilityOfKDimCGF}

\begin{proof}[Proof of \cref{thm:TrivialLifting}]
    The $\bigO{k}$ and $\bigO{nk}$ complexity of \cref{alg:TrivialLifting} is straightforward to verify. We now prove the correctness of the algorithm. Given the vectors $\a^0,\a^1,\dots,\a^k \in \R^k$ defined in \cref{sec:kDimCGFConstruction}, notice that the set
    \begin{align*}
        &\left\{\x \in \R^k:  \langle \a^0 , \x \rangle \leq 1, \langle \a^1 , \x \rangle \leq 1,\dots,\langle \a^k , \x \rangle \leq 1\right\},\\
        =&\left({\f}-\mathbf{1}\right)+\left\{ \x \in \R^k: \sum_{i=1}^{k}\bm{\mu}_i \x_i \leq 1,\x_1\geq0,\dots,\x_k\geq0 \right\}\\
        :=&G(\f , \bm{\mu}) 
    \end{align*} 
    is a translation of a $k$-dimensional simplex. From the literature on cut generating functions, there exists a so-called lifting region (refer to \cite{dey2010two,basu2015operations,basu2013unique}), $R \subseteq G(\f,\bm{\mu})$, such that for any $\r \in \R^k$, there exists a $\widetilde{\r} \in R \cap \left( \f + \Z^k \right)$ such that $\pi_{\f,\bm{\mu}}(\r) = \max\limits_{j=0,\dots,k} \langle \a^i , \widetilde{\r} \rangle$. 

    Theorem~2.2 in \cite{basu2019can} implies that 
    \begin{equation}\label{eq:LiftingRegion}
    R \subseteq G(\f,\bm{\mu}) \subseteq \cup_{i=1}^k \left( [\f_1-1,\f_1] \times \cdots \times [\f_k-1,\f_k] + \{\lambda \e^i: \lambda \in \R_+\} \right):=\widetilde{R},
    \end{equation} 
    then for any given $\r \in \R^k$, we take $\w = \lfloor \r \rfloor + \sum_{i=1}^k\mathbbm{1}(\r_i \geq \f_i + \lfloor \r_i \rfloor)\e^i \in \Z^k$, which is an integer vector such that $\bar{\r}:=\r - \w \in [\f_1-1,\f_1] \times \cdots \times [\f_k-1,\f_k]$. Therefore,
\begin{align*}
    \pi_{\f,\bm{\mu}}(\r)&= \min_{\z \in \Z^n} \max_{j=0,\dots,k} \langle \a^j, \r+\z \rangle\\ 
    &= \min_{\z \in \Z^n} \max_{j=0,\dots,k} \langle \a^j, \bar{\r}+\z \rangle\\
    &= \min_{i \in \{1,\dots,k\}}\min_{\lambda\in\Z} \max_{j=0,\dots,k} \langle \a^j, \bar{\r}+\lambda\e^i \rangle \\
    &= \min_{i \in \{1,\dots,k\}}\min_{\lambda\in\Z_+} \max_{j=0,\dots,k} \langle \a^j, \bar{\r}+\lambda\e^i \rangle. 
    %\min_{\z \in \widetilde{R} - (\r-\w)} \gamma_{G(\f,\bm{\mu})}((\r-\w)+\z).
\end{align*}
Notice that for any $i \in [k]$, 
\begin{align}
    &\max_{j=0,\dots,k} \langle \a^j, \bar{\r}+\lambda\e^i \rangle\notag\\ 
    = &\max \left\{ \langle \a^0 , \bar{\r}+\lambda\e^i \rangle, \frac{\bar{\r}_1}{\f_1-1},\dots,\frac{\bar{\r}_{i-1}}{\f_{i-1}-1},\frac{\bar{\r}_i+\lambda}{\f_i-1}, \frac{\bar{\r}_{i+1}}{\f_{i+1}-1},\dots,\frac{\bar{\r}_k}{\f_k-1} \right\}\notag\\
    = &\max \left\{  \frac{\sum_{i=1}^{k}\bm{\mu}_i\bar{\r}_i}{\sum_{i=1}^{k}\bm{\mu}_i\f_i} + \frac{\bm{\mu}_i}{\sum_{i=1}^{k}\bm{\mu}_i\f_i}\lambda, \frac{\bar{\r}_i}{\f_i-1} + \frac{1}{\f_i-1}\lambda, \max \left\{ \frac{\bar{\r}_j}{\f_j-1}: j \in [k]\backslash\{i\} \right\} \right\}  \label{eq:GaugeFunction}
\end{align}
is a one-dimensional piecewise linear function with at most three pieces (relaxing the feasible region of $\lambda$ from $\Z$ to $\R$). Use the notation in \cref{alg:TrivialLifting}, let $p = \sum_{i=1}^{k}\bm{\mu}_i \bar{\r}_i, q = \sum_{i=1}^{k} \bm{\mu}_i \f_i \geq \frac{1}{\tau}\sum_{i=1}^k \f_i > 0$ (since $\f \neq \0$), $a = \max \left\{ \frac{\bar{\r}_j}{\f_j-1}:j \in \{1,\dots,k\} \right\}$, $i^* = \argmax\left\{ \frac{\bar{\r}_j}{\f_j-1}:j \in \{1,\dots,k\} \right\}$, $b = \max\left\{ \frac{\bar{\r}_j}{\f_j-1}:j\in\{1,\dots,k\}\backslash\{i^*\} \right\}$. Then for $i \in \left\{ 1,\dots,k \right\}$, there are two cases:

\textbf{Case 1: $i \neq i^*$.} In this case, notice that 
\begin{equation*}
    a=\frac{\bar{\r}_{i^*}}{\f_{i^*}-1} \geq \frac{\bar{\r}_i}{\f_i-1} \geq \frac{\bar{\r}_i+\lambda}{\f_i-1}
\end{equation*}
since $\frac{\lambda}{\f_i-1}\leq 0$. Then we have 
\begin{equation*}
    \max_{j=0,\dots,k} \langle \a^j, \bar{\r}+\lambda\e^i \rangle = \max \left\{ \frac{p+\bm{\mu}_i\lambda}{q},\frac{\bar{\r}_i+\lambda}{\f_i-1},a \right\} =  \max \left\{ \frac{p+\bm{\mu}_i\lambda}{q},a \right\} 
\end{equation*}
as a nondecreasing function of $\lambda$. Therefore, 
\begin{equation*}
    \min_{\lambda \in \Z_+} \max \left\{ \frac{p+\bm{\mu}_i\lambda}{q},a \right\} = \min_{\lambda \in \R_+} \max \left\{ \frac{p+\bm{\mu}_i\lambda}{q},a \right\} = \max \left\{ \frac{p}{q},a \right\},
\end{equation*}
is minimized at $\lambda = 0$.

\textbf{Case 2: $i = i^*$.} In this case, we can rewrite \eqref{eq:GaugeFunction} as
\begin{equation*}
    \max_{j=0,\dots,k} \langle \a^j, \bar{\r}+\lambda\e^i \rangle = \max \left\{ \frac{p+\bm{\mu}_{i^*}\lambda}{q},\frac{\bar{\r}_{i^*}+\lambda}{\f_{i^*}-1},b \right\}. 
\end{equation*}
The slopes of these three affine linear functions are $\frac{\bm{\mu}_{i^*}}{q} \geq \frac{1}{\tau q} > 0 ,\frac{1}{\f_{i^*}-1}<0$, and $0$ respectively. The intersection point of the first two functions is given by $J = \left( \lambda^*, \frac{\bar{\r}_{i^*}+\lambda^*}{\f_{i^*}-1} \right)$, where $\lambda^* = \frac{q\bar{\r}_{i^*} - p(\f_{i^*}-1)}{(\f_{i^*}-1)\bm{\mu}_{i^*}-q}$ since 
\begin{equation*}
    \frac{p+\bm{\mu}_{i^*} \lambda^*}{q} = \frac{\bar{\r}_{i^*}+\lambda^*}{\f_{i^*}-1} \iff \underbrace{\left( (\f_{i^*}-1)\bm{\mu}_{i^*}-q \right)}_{<0} \lambda^* = q \bar{\r}_{i^*} - p(\f_{i^*}-1) \iff \lambda^* = \frac{q\bar{\r}_{i^*} - p(\f_{i^*}-1)}{(\f_{i^*}-1)\bm{\mu}_{i^*}-q}. 
\end{equation*}
Observe that $\lambda^*$ is always an optimal solution to $\min\limits_{\lambda \in \R} \max\limits_{j=0,\dots,k} \langle \a^j, \bar{\r}+\lambda\e^{i^*} \rangle$, regardless of whether the constant function $c_i$ is below or above $J$. Since the maximum of three affine linear functions forms a convex function, the optimal solution of $\min\limits_{\lambda \in \Z} \max\limits_{j=0,\dots,k} \langle \a^j, \bar{\r}+\lambda\e^{i^*} \rangle$ is attained at either $\lfloor \lambda^* \rfloor$ or $\lceil \lambda^* \rceil$. Therefore, we have
\begin{align*}
    &\min_{\lambda \in \Z}\max\limits_{j=0,\dots,k} \langle \a^j, \bar{\r}+\lambda\e^{i^*} \rangle\\ = &\min \left\{ \max\limits_{j=0,\dots,k} \left\langle \a^j, \bar{\r}+\lfloor \lambda^* \rfloor \e^{i^*} \right\rangle, \max\limits_{j=0,\dots,k} \left\langle \a^j, \bar{\r}+ \lceil \lambda^* \rceil\e^{i^*} \right\rangle \right\}\\
    = &\min \left\{ \max \left\{ \frac{p+\bm{\mu}_{i^*} \lceil \lambda^* \rceil}{q}, \frac{\bar{\r}_{i^*} + \lceil \lambda^* \rceil}{\f_{i^*}-1}, b \right\}, \max \left\{ \frac{p + \bm{\mu}_{i^*}\lfloor \lambda^* \rfloor}{q}, \frac{\bar{\r}_{i^*} + \lfloor \lambda^* \rfloor}{\f_{i^*}-1}, b \right\} \right\}. 
\end{align*}
Thus, take the minimum of the two cases, we have
\begin{equation*}
    \pi_{\f,\bm{\mu}}(\r) = \min \left\{    \max \left\{ \frac{p+\bm{\mu}_{i^*} \lceil \lambda^* \rceil}{q}, \frac{\bar{\r}_{i^*} + \lceil \lambda^* \rceil}{\f_{i^*}-1}, b \right\}, \max \left\{ \frac{p + \bm{\mu}_{i^*}\lfloor \lambda^* \rfloor}{q}, \frac{\bar{\r}_{i^*} + \lfloor \lambda^* \rfloor}{\f_{i^*}-1}, b \right\} , \max \left\{ \frac{p}{q},a \right\}    \right\}. 
\end{equation*}
\end{proof}

\begin{proof}[Proof of \cref{prop:PiecewiseStructureOfGamma}]
    Similar to \cref{eq:LiftingRegion}, we observe that the lifting region
    \begin{align*}
        R \subseteq G(\f, \bm{\mu}) &\subseteq \bigcup_{i=1}^k \left( [\f_1-1, \f_1] \times \cdots \times [\f_k-1, \f_k] + \left\{\lambda \e^i: \lambda \in \left[0, \frac{1}{\bm{\mu}_i}\right]\right\} \right)\\
        & \subseteq \bigcup_{i=1}^k \left( [\f_1-1, \f_1] \times \cdots \times [\f_k-1, \f_k] + \left\{\lambda \e^i: \lambda \in \left[0, \tau\right]\right\} \right)\\
        & = \bigcup_{i=1}^k \left( [\f_1-1, \f_1] \times \cdots \times [\f_k-1, \f_k] + \left\{\lambda \e^i: \lambda \in \{0,1,\dots,\tau\}\right\} \right). 
    \end{align*}
    Let $\bar{\r}^j = \r^j - \lfloor \r^j \rfloor - \sum_{i=1}^k \mathbbm{1}(\r^j_i \geq \f_i + \lfloor \r^j_i \rfloor)\e^i \in [\f_1-1, \f_1] \times \cdots \times [\f_k-1, \f_k]$, $j = \{1,\dots,n\}$. Then for any fixed $j$, based on the proof of \cref{thm:TrivialLifting}, we have
    \begin{align*}
        \pi_{\f,\bm{\mu}}(\r^j) &= \min_{i \in \{1, \dots, k\}} \min_{\lambda \in \{0, 1, \dots, \tau\}} \max\limits_{l=0,\dots,k} \langle \a^l, \bar{\r}^j+\lambda\e^i \rangle\\
        &= \min_{i \in \{1, \dots, k\}} \min_{\lambda \in \{0, 1, \dots, \tau\}} \max \left\{ \frac{p_j + \bm{\mu}_i \lambda}{q}, \frac{\bar{\r}_i^j + \lambda}{\f_i-1}, \max \left\{ \frac{\bar{\r}^j_l}{\f_l-1}: l \in \{1,\dots,k\}\backslash\{i\} \right\} \right\}\\
        &= \min \left\{ \max \left\{ \frac{p_j}{q}, a_j \right\}, \min_{\lambda \in \{0,1,\dots,\tau\}} \max \left\{ \frac{p_j+\bm{\mu}_{i^*_j} \lambda}{q}, \frac{\bar{\r}_{i^*_j}^j+\lambda}{\f_{i^*_j}-1},b_j \right\} \right\} , 
    \end{align*}
    where, as defined in \cref{alg:TrivialLifting} and \cref{thm:TrivialLifting}, $p_j = \sum_{i=1}^{k} \bm{\mu}_i \bar{\r}_i^j$, $q = \sum_{i=1}^{k} \bm{\mu}_i \f_i \geq \frac{1}{\tau}\sum_{i=1}^k \f_i > 0$, $a_j = \max \left\{ \frac{\bar{\r}^j_i}{\f_i-1}:i \in \{1,\dots,k\} \right\}$, $i^*_j = \argmax\left\{ \frac{\bar{\r}^j_i}{\f_i-1}:i \in \{1,\dots,k\} \right\}$, $b_j = \max\left\{ \frac{\bar{\r}^j_i}{\f_i-1}:i\in\{1,\dots,k\}\backslash\{i_j^*\} \right\}$. 

    This motivates considering the following numbers, for any fixed $j \in \{1,\dots,n\}$: 
    \begin{equation*}
        \frac{p_j}{q},a_j, \frac{p_j+\bm{\mu}_{i^*_j} \lambda}{q}, \frac{\bar{\r}_{i^*_j}^j + \lambda}{\f_{i^*_j}-1}, b_j,\ \lambda \in \{0,1,\dots,\tau\}. 
    \end{equation*}
    There are $1 + 1 + (\tau+1) + (\tau+1) + 1 = 2\tau+5$ numbers in total, so the pairwise comparison of these numbers can be done by at most $(2\tau+5)(2\tau+6)/2 \leq 2(\tau+3)^2$ hyperplanes in the $\bm{\mu}$ space $\Delta_k^\tau$. This is because, in the worst scenario, the equality
    \begin{equation*}
        \frac{p_j+\bm{\mu}_{i^*_j} \lambda}{q} = \frac{p_{j}+\bm{\mu}_{i^*_{j}}\lambda'}{q} \iff \bm{\mu}_{i^*_j} \lambda - \bm{\mu}_{i^*_{j'}} \lambda' = 0
    \end{equation*}
    is a hyperplane on $\bm{\mu}$, and similar arguments apply to other pairs. These hyperplanes decompose the $\Delta_k^\tau$ space into some regions such that, within each region, the order of the above $2\tau+5$ numbers is fixed. Within each region, $\pi_{\f,\bm{\mu}}(\r^j)$ can be expressed as the quotient of two affine linear functions in $\bm{\mu}$, with the denominator being $q = \sum_{i=1}^{k} \bm{\mu}_i \f_i > 0$. Overlapping all hyperplanes across $j \in \{1, \dots, n\}$ results in at most $2n(\tau + 3)^2$ hyperplanes decomposing the $\Delta_k^\tau$ space.

\end{proof}

\begin{proof}[Proof of \cref{thm:LearnabilityOfKDimCGF}]
    Similar to what we have done in the proof of \cref{thm:LearnabilityOfOptimalMu1Mu2}, let $\xi_1,\dots,\xi_\Gamma$ denote the degree 5 polynomials given in \cref{lem:Balcan2022}, where $\Gamma=\O\left( (14)^n(m+2n)^{3n^2}\varrho^{5n^2} \right)$. By \cref{prop:PiecewiseStructureOfGamma}, there are at most $2n(\tau+3)^2$ hyperplanes decomposing the $\Delta_k^\tau$ space such that, within each decomposed region, each coordinate of the cutting plane derived from the cut generating functions $\pi_{\f,\bm{\mu}}$ is a fixed rational function given by the quotient of two affine linear functions on $\bm{\mu}$. Also, by \cref{lem:poly-decomp}, these hyperplanes decompose the $\bm{\mu}$ space $\Delta_k^\tau$ into at most 
    \begin{equation*}
        \left(\frac{2n(\tau+3)^2}{k}\right)^{k} \leq 2^k n^{k} (\tau+3)^{2k}:= K
    \end{equation*}
    regions, denoted as $Q_1,\dots,Q_{\widetilde{K}}$, where $\widetilde{K} \leq K$.  
    
    We fixed a region $Q_i$ for any $i \in \{1,\dots,\widetilde{K}\}$. For any $\bm{\mu} \in Q_i$, we claim that $\begin{bmatrix}\bm{\alpha}(\bm{\mu}) \\ \beta(\bm{\mu})\end{bmatrix}$ can always be expressed as $\frac{\p^i(\bm{\mu})}{q(\bm{\mu})} \in \R^{n+1}$, where each coordinate of $\p^i$ is an affine linear function of $\bm{\mu}$, and $q$ is a fixed affine linear function of $\bm{\mu}$. This is because, according to the form derived in \cref{prop:PiecewiseStructureOfGamma}, the values of the trivial liftings are either rational functions with the denominator $q(\bm{\mu}) = \sum \bm{\mu}_i\f_i>0$ (which is fixed once the instance $I=(A,\b,\c)$ is fixed), or just some constants independent of $\bm{\mu}$. Then by \cref{lem:CutIsLinear}, with a fixed $I = (A, \b, \c)$, the cutting plane for the original problem is a fixed affine transformation of the above form. Therefore, in this fixed region $Q_i$, the final form of the cutting plane is fixed and can be written as $\frac{\p^i}{q}(\bm{\mu}) \in \R^{n+1}$. Therefore,
    \begin{equation*}
        \left(\xi_1\left( \begin{bmatrix}\bm{\alpha}(\bm{\mu})\\\beta(\bm{\mu})\end{bmatrix} \right),\dots,\xi_\Gamma\left( \begin{bmatrix}\bm{\alpha}(\bm{\mu})\\\beta(\bm{\mu})\end{bmatrix} \right) \right) = \left( \left( \xi_1 \circ \frac{\p^i}{q} \right)(\bm{\mu}),\dots,\left( \xi_\Gamma \circ \frac{\p^i}{q} \right)(\bm{\mu}) \right)
    \end{equation*}
    gives $\Gamma$ fixed rational functions of $\bm{\mu}$, decomposing $Q_i$, in the form of the quotient of two degree 5 polynomials. For any $\bm{\mu}\in Q_i$ within each decomposed region, these $\Gamma$ rational functions have an invariant sign pattern, hence the tree size after adding $\bm{\alpha}(\bm{\mu})^\T \x \leq \beta(\bm{\mu})$ at the root remains the same. By traversing all $Q_i$, the total number of such rational functions is given by
    \begin{equation*}
        \bigO{ 2^k n^{k} (\tau+3)^{2k} (14)^n(m+2n)^{3n^2}\varrho^{5n^2} }, 
    \end{equation*}
    then the pseudo-dimension result follows from \cref{lem:pseudo-dimension}: 
    \begin{equation*}
        \Pdim\left(\left\{T^k(\cdot,\bm{\mu}):\I \rightarrow [0,B] \mid \bm{\mu}\in\Delta_k^\tau \right\}\right) = \bigO{kn^2\log((m+n)\varrho)+k^2\log(n\tau)}. 
    \end{equation*}

\end{proof}

\end{document}